\newlength{\hdelta}
\newlength{\vdelta}
\theoremstyle{plain}
\newtheorem{theorem}{Theorem}[section]
\newtheorem{lemma}[theorem]{Lemma}
\newtheorem{proposition}[theorem]{Proposition}
\theoremstyle{definition}
\newtheorem*{remark*}{Remark}
\numberwithin{equation}{section}
\newcommand{\mailto}[1]{\href{mailto:#1}{\nolinkurl{#1}}}
\newcommand{\floor}[1]{\lfloor #1 \rfloor}
\newcommand{\Var}{\operatorname{Var}}
\newcommand{\E}{\operatorname{E}}
\newcommand{\pr}{\operatorname{P}}
\newcommand{\eqst}{=_{\rm{st}}}
\newcommand{\R}{\mathbb{R}}
\newcommand{\dto}{\xrightarrow{d}}
\newcommand{\pto}{\xrightarrow{p}}
\newcommand{\Tsat}{T_{\rm full}}
\newcommand{\Thalf}{T_{\rm half}}
\newcommand{\ThalfL}{T_{\text{{\rm half-2}}}}
\newcommand{\Erdos}{Erd\H{o}s}
\newcommand{\Ber}{\operatorname{Bernoulli}}
\newcommand{\EE}{\E}
\newcommand{\PP}{\pr}
\newcommand{\RR}{\R}
\newcommand{\n}{^{(n)}}
\newcommand{\uexp}{\xi} 
\newcommand{\alkusm}{V} 
\newcommand{\gumbelsm}{W} 
\newcommand{\uint}{Z} 
\newcommand{\prop}{S} 
\newcommand{\logis}{S} 
\newcommand{\Rsum}{\Sigma\n}
\newcommand{\Rsumm}{\overline \Sigma\n}
\newcommand{\ind}{{\bf 1}}
\begin{document}

\title{Information spreading in a large population of \\ active transmitters and passive receivers}

\author{
 Pekka Aalto\thanks{
 Department of Mathematics and Statistics,
 PO Box 35,
 40014 University of Jyväskylä, Finland.
 \quad Email: \protect\mailto{aalto.pekka@gmail.com}
}
 \and
 Lasse Leskelä\thanks{
 Department of Mathematics and Systems Analysis,
 PO Box 11100
 0076 Aalto University, Finland.
 \quad URL:
 Email: \protect\mailto{lasse.leskela@aalto.fi}
}
}

\date{\today}

\maketitle 

\begin{abstract}
This paper discusses a simple stochastic model for the spread of messages
in a large population with two types of individuals: transmitters and receivers. Transmitters, after
receiving the message, start spreading copies of the message to their neighbors. Receivers may receive the message, but will never spread it further. We derive approximations of the broadcast time and
the first passage times of selected individuals in populations of size tending to infinity.
These approximations explain how much the fact that only a fraction of the individuals are transmitters slows down the propagation of information. Our results also sharply characterize the statistical dependence structure of first passage times using
Gumbel and logistic distributions of extreme value statistics.
\end{abstract}

\noindent
{\bf Keywords}: rumor spreading, randomized broadcasting, first-passage percolation, stochastic epidemic model

\section{Introduction}

\paragraph{Background and objectives.}
Analyzing the spread of information in computer and social networks has become important as
more and more communication takes place over high-speed digital connections. Especially, messages in online social networks tend to spread extremely fast due to the ease of copying and
relaying messages. A simple mathematical model for the spreading phenomenon is to assume that individuals relay copies of messages at random time instants to randomly chosen neighbors in a graph that represents the communication infrastructure. In contexts where individuals keep on transmitting a message for a long time, a key
quantity is the \emph{broadcast time}, the time it takes to spread a message from a single
root node to all individuals connected to the root, and the \emph{first passage time}, the
time it takes for a message to propagate from the root to a selected target. Such models
have been studied under different names in various areas, such as materials science (first-passage
percolation), computer networks (randomized broadcasting), and epidemiology (SI model, SIR model).

A special feature of online social networks is that different individuals tend to behave in a highly different manner, with some individuals quickly relaying most messages they receive, and some hardly
ever relaying anything. Analyzing the effect of such heterogeneity calls for stochastic
spreading models in random environments, for which the literature appears sparse. As a step
towards more comprehensive analysis of random broadcasting in random environments, we study a
simple communication model on a homogeneously mixing population which can be divided into
transmitters and receivers of a selected message. The \emph{transmitters} (or spreaders) will start relaying copies of the message to their neighbors after receiving it, whereas the \emph{receivers} (or stiflers) never relay the message. Our focus will be on an asynchronous communication mode where individuals make contacts to randomly chosen neighbors at random time instants, independently of each other. When a contact is made, a copy of the message is sent from the source to the target if the source was an informed transmitter of the message. Our main research question is:
\begin{quote}
\it How much does the fact that only a small fraction of the individuals relay the selected message slow down the spread of information?
\end{quote}

\paragraph{Related work: Homogeneous populations.}
In a large homogeneous population of size $n$ where all nodes are transmitters and make contacts at rate $\lambda$,
the model reduces to a first-passage percolation model on the complete graph with independent link-passage times,
and the by now classical paper of Janson \cite{Janson_1999} shows that when $n$ tends to infinity the
broadcast time distribution can be approximated by
\begin{equation}
 \label{eq:BroadcastTimeHom}
 \Tsat
 \ \approx \
 \frac{1}{\lambda} \left( 2\log n + \gumbelsm + \gumbelsm' \right),
\end{equation}
where $\gumbelsm$ and $\gumbelsm'$ are independent random numbers following a Gumbel distribution. Moreover,
the joint distribution of the first passage times from the root to any fixed set of target nodes $K$ can be approximated by
\begin{equation}
 \label{eq:PassageTimeHom}
 (\tau_i)_{i \in K} \ \approx \
 \frac{1}{\lambda} \left( \log n + W + L_i \right)_{i \in K},
\end{equation}
where $L_i$ are independent logistically distributed random numbers, independent of $W$. A univariate version of \eqref{eq:PassageTimeHom} was given by Janson \cite{Janson_1999}, and the multivariate case was sketched by Aldous \cite{Aldous_2013} and proved in detail by Bhamidi
\cite{Bhamidi_2008}. Analytical formulas for the broadcast time in a homogeneous population have been reported for more general network topologies, including  \Erdos--Rényi random graphs
\cite{VanDerHofstad_Hooghiemstra_VanMieghem_2002,Bhamidi_VanDerHofstad_Hooghiemstra_2011},
the inhomogeneous Bollobás--Riordan--Janson graph \cite{Kolossvary_Komjathy_2015}, and regular
random graphs and the configuration model~\cite{Bhamidi_VanDerHofstad_Hooghiemstra_2010,Bhamidi_VanDerHofstad_Hooghiemstra_2013+,Amini_Draief_Lelarge_2013}.
Discrete-time analogues of the aforementioned results are known for the complete graph \cite{Frieze_Grimmett_1985,Pittel_1987} and \Erdos--Rényi random graphs
\cite{Feige_Peleg_Raghavan_Upfal_1990,Fountoulakis_Huber_Panagiotou_2010}.
Salez \cite{Salez_2013} has recently derived an extension of \eqref{eq:PassageTimeHom} to random regular graphs.

\paragraph{Related work: Heterogeneous populations.}
Molchanov and Whitmeyer \cite{Molchanov_Whitmeyer_2010} derived an approximation of the broadcast time in the special case where every transmitter is initially informed of the selected message. In this case the model reduces to a version of the classical coupon collector's problem for which the limiting result is well known
\cite{Baum_Billingsley_1965}. Molchanov and Whitmeyer stated it as an open problem to extend the analysis to
more realistic scenarios where messages can be relayed. An early study on a related broadcasting model in a heterogeneous population was done by Daley and Kendall \cite{Daley_Kendall_1965} who studied the number of eventually informed nodes in a context where informed transmitters may stop transmitting upon contacting another informed node; see \cite{Lebensztayn_Machado_Rodriguez_2011} and references therein for generalizations and refinements.

\paragraph{Our contributions.}
In this paper we extend the results \eqref{eq:BroadcastTimeHom}--\eqref{eq:PassageTimeHom} to a context where (i) only a fraction $p$ of the nodes are transmitters for the selected message, and (ii) different individuals may have different contact rates. 
Under mild regularity assumptions, we will show that if the population size $n$ grows to infinity then the broadcast time distribution can be approximated by
\begin{equation}
 \label{eq:BroadcastTime}
 \Tsat
 \ \approx \
 \frac{1}{\lambda p} \left( \log(np) + \log n + V + \gumbelsm \right),
\end{equation}
and the joint distribution of the first passage times from the root to any fixed set of target nodes
$K$ by
\begin{equation}
 \label{eq:PassageTime}
 (\tau_i)_{i \in K} \ \approx \
 \frac{1}{\lambda p} \left( \log(np) + V + L_i \right)_{i \in K},
\end{equation}
where $\lambda$ is the mean contact rate of the transmitters and $V,\gumbelsm,L_i$ are independent random variables. When the contact rates of the transmitters are not identical, there seems to be no simple universal formula for the distribution of $V$ which captures the random fluctuations of the initial broadcasting phase. 
Instead, we provide a stochastic representation of $V$ which allows one to easily generate approximate samples from it. The approximations \eqref{eq:BroadcastTime}--\eqref{eq:PassageTime} are shown to be valid also when the fraction of transmitters $p = p_n$ depends on population size.

The rest of paper is organized as follows. In Section~\ref{sec:Model} we describe the model details and technical assumptions, and in Section~\ref{sec:Results} we state and discuss the main results. Detailed mathematical proofs of the results are given in Section~\ref{sec:Analysis}.

\section{Model description}
\label{sec:Model}

\subsection{Information dynamics}

Consider a population of nodes $\{0,1,\dots,n\}$ where node 0, called the \emph{root}, starts spreading copies of a new message to the other nodes. The nodes in the network are divided into \emph{transmitters} and \emph{receivers} of the selected message. We denote $\theta_i = 1$ if node $i$ is a transmitter, and $\theta_i = 0$ otherwise.
We assume that node $i$ makes contacts at the jump instants of a Poisson process of rate $Z_i$, independently of the others. Hence the population demographics is described by the numbers $(Z_i,\theta_i)$, $i=0,\dots,n$.

To describe the dynamics of the system, we denote $\Phi_i(t) = 1$ if node $i$ is \emph{informed} (has received a copy of the selected message) at time $t$, and $\Phi_i(t) = 0$ otherwise. When node $i$ makes a contact at time $t$, it selects a uniformly random target $j$ in the set of nodes excluding itself, independently of the past history of the system. If node $i$ is an informed transmitter ($\Phi_i(t-)=1$ and $\theta_i=1$), and $j$ is uninformed ($\Phi_j(t-) = 0$) when the contact is made, then a copy of the message gets transmitted from $i$ to $j$, and as a result of interaction $\Phi_j(t) = 1$ meaning node $j$ becomes informed instantly. Otherwise nothing happens at this event. 

We can make the above informal description precise by assuming that $(\Phi_0(t),\dots,\Phi_n(t))$
conditional on $(Z,\theta)$ is a right-continuous Markov process with values in $\{0,1\}^{n+1}$, 
initial state $\Phi(0) = (1,0,\dots,0)$, and transitions
\[
 \zeta \mapsto I_j \zeta
 \quad \text{at rate} \quad
 \frac{\sum_{i=0}^n \theta_i Z_i \zeta (1-\zeta)}{n},
 \quad j=0,\dots,n,n,
\]
where the map $I_j: \{0,1\}^{n+1} \to \{0,1\}^{n+1}$
updates the $j$-coordinate of its input to one and leaves the other coordinates unchanged. When the root node is a transmitter with a strictly positive contact rate $Z_0 > 0$, the system eventually gets absorbed into the state where all nodes are informed. The random time instant when this happens is called the \emph{broadcast time}, and denoted by
\begin{equation}
 \label{eq:DefBroadcast}
 \Tsat = \inf\{t \ge 0: \Phi(t) = (1,\dots,1) \}.
\end{equation}
In the terminology of first-passage percolation, this random quantity is often called the flooding time.
The \emph{first passage time} of node $i$ is denoted by
\begin{equation}
 \label{eq:DefPassage}
 \tau_i = \inf \left\{t \ge 0: \Phi_i(t) = 1 \right\}.
\end{equation}
This is the time instant when node $i$ becomes informed.

\subsection{Regular demographics}

In the sequel we will consider a sequence of models indexed by $n$, the number of initially
uninformed nodes. To emphasize the dependence on $n$ we often write
$\Phi^{(n)}(t)$ for the state of the $n$-th model, and so on. However, we might omit the superscript in places where
there is no risk of confusion.

To analyze the model as $n$ grows large, we need to make some regularity assumptions on the
demographics $(\theta_i^{(n)}, Z_i^{(n)})_{i=0}^n$ of the $n$-th model. To avoid the trivial case where nothing is ever transmitted, we assume that
\begin{equation}
 \label{eq:Demographics1}
 \theta_0^{(n)} = 1 \quad \text{and} \quad Z_0^{(n)} = z_0
 \quad \text{almost surely}
\end{equation}
for some $z_0>0$. We assume that the indicators $\theta_i^{(n)} \in \{0,1\}$ and the contact rates $Z_i^{(n)} \ge 0$ of nonroot nodes $i \ge 1$ are independent and such that $\pr(\theta_i^{(n)} = 1) = p_n$ and $Z_i^{(n)}$ is distributed according to a probability distribution $F$ on the nonnegative real numbers $\R_+$. To guarantee that there will be enough transmitters in a large population, we assume that the expected number of nonroot transmitters satisfies
\begin{equation}
 \label{eq:Demographics2}
 n p_n \gg n^\epsilon
\end{equation}
for some $\epsilon > 0$, where $a_n \gg b_n$ is shorthand for $a_n/b_n \to \infty$ as $n \to \infty$. To guarantee that the average contact rate of a large number of transmitters can be represented by a constant, we also assume that
\begin{equation}
 \label{eq:Demographics3}
 \lambda := \int z F(dz) > 0
 \quad \text{and} \quad
 \int z^2 F(dz) < \infty.
\end{equation}

To summarize, the $n$-th model is parameterized by a triplet $(z_0,p_n,F)$. Let us mention that the assumption that the contact rate of the root node $z_0$ is nonrandom is imposed for notational and conceptual convenience. Relaxing the assumption in the main results that follow is straightforward and left to the reader.

\section{Main results}
\label{sec:Results}

\subsection{Broadcast time}

Recall that the \emph{broadcast time} of the model is the time it takes for all nodes to become informed, defined by \eqref{eq:DefBroadcast}. To give a more detailed picture of information propagation, we decompose the broadcast time according to
\[
 \Tsat^{(n)} \, = \, \Thalf^{(n)} + \ThalfL^{(n)},
\]
where the \emph{first half-broadcast time}
\[
 \Thalf^{(n)} = \inf \left\{t \ge 0: \sum_{i=0}^n \Phi^{(n)}_i(t) \ge \frac{n+1}{2} \right\}
\]
is the time until half of the population becomes informed, and the second half-broadcast
time $\ThalfL^{(n)} = \Tsat\n - \Thalf\n$ is the time to inform the remaining half.

The following result gives a precise approximation of the broadcast time as $n$ tends to infinity. The symbols $\pto$ and $\dto$ stand for convergence in probability and convergence in distribution, respectively. We write $a_n \ll b_n$ if $a_n/b_n \to 0$ as $n \to \infty$. The symbol $\log x $ refers to the natural logarithm of $x$ to the base $e$.

\begin{theorem}
\label{the:BroadcastTime}
Assume that the population demographics parameterized by $(z_0,p_n,F)$ satisfies
\eqref{eq:Demographics1}--\eqref{eq:Demographics3}. Then
\[
 \left(\lambda p_n \Thalf\n - \log (p_n n), \ \lambda p_n \ThalfL\n - \log n \right)
 \ \dto \ (\alkusm,\gumbelsm),
\]
and in particular
\begin{equation}
 \label{eq:Main}
 \lambda p_n \Tsat\n - \log(p_n n) - \log n
 \ \dto \ \alkusm + \gumbelsm,
\end{equation}
where the random limits $\alkusm$ and $\gumbelsm$ are independent.
\end{theorem}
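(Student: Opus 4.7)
My plan is to decompose the analysis across the two halves of the broadcast and exploit a Markov-type decoupling at the stopping time $\Thalf\n$. The first half is governed by the near-exponential growth of the informed-transmitter population, and the second half by extreme-value statistics of a near-saturated system.

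Let $X\n(t) = \sum_{i=0}^n \theta_i\n \Phi_i\n(t)$ denote the number of informed transmitters and $Y\n(t)$ that of informed receivers. While $X\n(t) = o(np_n)$, $X\n$ evolves approximately as a Markov branching process in which each informed transmitter $i$ produces new informed transmitters at rate $Z_i\n p_n$, since a random contact lands on an uninformed transmitter with probability $\approx p_n$. Classical branching-process theory then yields $e^{-\lambda p_n t} X\n(t) \to W_\infty$ almost surely, with $W_\infty$ a positive random variable whose law is determined by $(z_0,F)$; equivalently, hitting times satisfy $\lambda p_n T\n_{k_n} - \log k_n \dto -\log W_\infty$ whenever $k_n \to \infty$ with $k_n = o(np_n)$. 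I would make this precise by coupling $(X\n,Y\n)$ to a pure branching process up to $T\n_{\epsilon np_n}$, bounding the coupling error via $np_n \gg n^\epsilon$. Moreover, every contact by an informed transmitter in this phase hits an uninformed transmitter (resp.\ receiver) with probability $\approx p_n$ (resp.\ $1-p_n$), so $Y\n(t)/X\n(t) \to (1-p_n)/p_n$ and $X\n(t)+Y\n(t) \approx X\n(t)/p_n$. Hence $\Thalf\n$ essentially coincides with the hitting time of $X\n$ at level $np_n/2$, yielding $\lambda p_n \Thalf\n - \log(np_n) \dto \alkusm$ for a random $\alkusm$ determined by $W_\infty$.

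For the second half, I would show that after $\Thalf\n$ the remaining uninformed transmitters become informed on a time scale $O(1/(\lambda p_n))$, after which each uninformed node is contacted at rate $n^{-1}\sum_i \theta_i\n Z_i\n \to \lambda p_n$ by the law of large numbers. Conditional on $\mF_{\Thalf\n}$, the residual waiting times of the $\approx n/2$ uninformed nodes are asymptotically i.i.d.\ $\Exp(\lambda p_n)$, and $\ThalfL\n$ is their maximum; classical extreme-value theory for exponentials then gives $\lambda p_n \ThalfL\n - \log n \dto \gumbelsm$ with $\gumbelsm$ of Gumbel type. Since $\alkusm$ is $\mF_{\Thalf\n}$-measurable while the conditional limit of $\lambda p_n \ThalfL\n - \log n$ does not depend on the conditioning, the limits $\alkusm$ and $\gumbelsm$ are asymptotically independent, and the marginal statement for $\Tsat\n$ follows by addition.

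The main obstacle I anticipate is the bridge between the branching regime, valid for $X\n(t) \ll np_n$, and the macroscopic regime $X\n(t) \sim np_n$, where the linearization breaks down. Handling it requires a fluid-limit analysis of the intermediate phase through the logistic ODE $\dot x = \lambda x(p_n - x)$, whose solutions saturate on time scale $1/(\lambda p_n)$, together with concentration estimates uniformly controlling $X\n(t)/(np_n)$ around the deterministic profile. A secondary subtlety is that the heterogeneous contact rates $Z_i\n$ make the distribution of $W_\infty$ (and hence of $\alkusm$) depend on $F$ nontrivially, so $\alkusm$ admits only a stochastic representation rather than a closed-form expression.
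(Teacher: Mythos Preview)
Your plan is broadly sound and could be made rigorous, but it diverges substantially from the paper's argument, and in ways worth noting.

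The paper does not work at the process level. Instead it uses the explicit representation
\[
 T_k\n \;=\; \sum_{j=0}^{k-1}\frac{n}{n-j}\,\frac{\xi_j}{R_j\n},
\]
and decomposes the index range $\{0,\dots,n\}$ into four blocks $[0,n^\beta]$, $[n^\beta,n/2]$, $[n/2,n-n^\beta]$, $[n-n^\beta,n]$ for some $\beta\in(\tfrac12+\alpha,1)$. A single concentration lemma (Kolmogorov's maximal inequality applied to $R_k\n-\lambda p_n k$) allows the random rates $R_k\n$ to be replaced by $\lambda p_n k$ uniformly over the last three blocks; the two middle blocks then contribute deterministic $(1-\beta)\log n$ terms by an elementary variance computation, the final block yields the Gumbel limit via R\'enyi's representation of exponential order statistics, and the initial block is handled by coupling to a thinned Yule process, producing the representation of $V$. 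Independence of $(V,W)$ is then immediate, because after the concentration replacement the initial and final pieces depend on disjoint collections of the $\xi_j$.

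Your approach replaces this sum-level four-piece decomposition by a two-phase process-level argument (branching coupling, then extreme-value theory on residual exponentials), with a fluid-limit ODE to bridge them. This is conceptually appealing and closer in spirit to the Aldous--Bhamidi treatment of the homogeneous case, but it costs you more work: the ``bridge'' you flag is genuine, and the paper sidesteps it entirely via the index cut at $n^\beta$ together with the concentration lemma --- no logistic ODE or fluid limit is needed. Likewise, your independence argument via conditioning on $\mathcal F_{\Thalf\n}$ requires showing that the conditional law of the second half is asymptotically nonrandom (i.e.\ that $R_k\n$ concentrates for $k\ge n/2$), which is precisely the content of the paper's concentration lemma; once you have that, the disjoint-$\xi_j$ argument is cleaner than an Aldous--Eagleson style conditioning. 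Finally, your intermediate step identifying $\Thalf\n$ with the hitting time of $X\n$ at level $np_n/2$ via $Y\n/X\n\approx(1-p_n)/p_n$ is unnecessary in the paper's framework, which works directly with the total informed count.
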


The limit $\gumbelsm$ in Theorem~\ref{the:BroadcastTime} captures the random fluctuations in the final phase of the broadcasting process and follows the standard Gumbel distribution
\[
 \pr(\gumbelsm \le t) = \exp(-e^{-t})
\]
commonly encountered in extreme value statistics.

The limit $V$ in Theorem~\ref{the:BroadcastTime} captures the random fluctuations in the initial phase. Unlike the final phase where the average contact rate of the transmitters is well concentrated around $\lambda$, the initial phase of  the broadcasting process is highly sensitive to the contact rates of the first transmitters. This is why there seems to be no simple universal analytical formula for the distribution of $V$. However, it turns out that $V$ can be represented as
\begin{equation}
 \label{eq:KickOff}
 V
 = \gamma + \sum_{k=1}^{\infty} \left(\frac{\uexp_k}{J_k} - \frac{1}{k} \right ),
\end{equation}
where $\gamma \approx 0.577$ is the Euler--Mascheroni constant, $\uexp_1,\uexp_2,\dots$ are independent with unit exponential distribution, and $J_k = \lambda^{-1} ( z_0 + Z_1 + \cdots + Z_{k-1})$ with $Z_1,Z_2,\dots$ being independent random numbers distributed according to $F$, also independent of the sequence $(\xi_1,\xi_2,\dots)$. In the special case where all contact rates $Z_i$ equal $\lambda$ with probability one, the distribution of $V$ reduces to a standard Gumbel.

To get a rough idea how much the fact that only a fraction $p$ of the nodes are transmitters slows down the information propagation, 
denote the leading nonrandom term in \eqref{eq:Main} by $B(n,p) = \frac{1}{\lambda p}( \log(pn) + \log n)$. Then
\[
 \frac{B(n,p)}{B(n,1)} = \frac{1}{p} \left( 1 - \frac{\log(1/p)}{2 \log n} \right),
\]
so for example, in a population of size $n=10^6$ where $p=1 \%$ are transmitters, we estimate that the broadcast time would be roughly 83 times higher compared to the population where all individuals were transmitters.

\subsection{First passage times}

Recall that the first passage time $\tau_i = \tau_i^{(n)}$ of node $i$ is the time instant at which node $i$ becomes informed, and is defined by \eqref{eq:DefPassage}.

\begin{theorem}
\label{the:FirstPassageTimes}
Assume that the population demographics parameterized by $(z_0,p_n,F)$ satisfies
\eqref{eq:Demographics1}--\eqref{eq:Demographics3}. Then for any fixed subset $K$ of nonroot nodes,
\[
 (\lambda p_n \tau_i\n - \log(p_n n))_{i \in K}
 \dto (\alkusm + L_i)_{i \in K},
\]
where $\alkusm$ has the representation~\eqref{eq:KickOff}, $L_i$ have the standard logistic
distribution
\[
 \pr(L_i \le t) = \frac{e^t}{e^t +1},
\]
and the random variables on the right side of the limit formula are independent.
\end{theorem}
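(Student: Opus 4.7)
The plan is to express each first-passage time $\tau_i\n$, for $i \in K$, as the first point of a conditionally inhomogeneous Poisson process whose rate is fed by the aggregate contact rate of the already-informed transmitters, and then to combine this with a functional refinement of Theorem~\ref{the:BroadcastTime} describing that aggregate rate as a time-shifted logistic profile. Writing
\[
 R\n(s) \;=\; z_0 \,+\! \sum_{\substack{i\ge 1 \\ \theta_i\n = 1,\ \tau_i\n \le s}} \! Z_i\n,
 \qquad
 \Lambda\n(t) \;=\; \frac{1}{n}\int_0^t R\n(s)\,ds,
\]
the event $\{\tau_i\n > t\}$ is, up to an error stemming from the fact that a single contact targets only one node, a Poisson void of cumulative rate $\Lambda\n(t)$. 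A coupling with the auxiliary model in which every ordered pair $(i,j)$ carries its own independent Poisson clock of rate $Z_i\n/n$ (under which target times are honestly conditionally independent given the transmitter history) costs only $o(1)$ in total variation for a fixed finite $K$, and under it $\tau_i\n = (\Lambda\n)^{-1}(\uexp_i)$ with $(\uexp_i)_{i \in K}$ independent $\Exp(1)$ variables, independent of $R\n(\cdot)$.

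The second ingredient, which I would extract by refining the analysis behind Theorem~\ref{the:BroadcastTime}, is the functional approximation
\[
 \frac{R\n(t)}{n p_n \lambda}
 \;\approx\;
 \sigma\!\bigl(\lambda p_n t - \log(n p_n) - \alkusm\bigr),
 \qquad
 \sigma(u) \;=\; \frac{e^u}{1+e^u},
\]
valid uniformly on a time window straddling both halves of the broadcast, with $\alkusm$ the kickoff random variable of~\eqref{eq:KickOff}. Integrating and using that $\log\!\bigl(1 + e^{-\log(np_n) - \alkusm}\bigr)\pto 0$ by~\eqref{eq:Demographics2},
\[
 \Lambda\n(t) \;=\; \log\!\bigl(1 + e^{\lambda p_n t - \log(n p_n) - \alkusm}\bigr) + o_p(1).
\]
Solving $\Lambda\n(\tau_i\n) = \uexp_i$ gives $\lambda p_n \tau_i\n - \log(n p_n) = \alkusm + \log(e^{\uexp_i}-1) + o_p(1)$, and the direct computation $\pr(\log(e^{\uexp}-1)\le t) = \pr(\uexp \le \log(1+e^t)) = e^t/(1+e^t)$ identifies $L_i := \log(e^{\uexp_i}-1)$ as standard logistic. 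Joint convergence across $i\in K$ is then automatic, since by construction $\alkusm$ and $(\uexp_i)_{i\in K}$ are jointly independent; a standard application of the continuous mapping theorem to the above display yields the claimed limit $(\alkusm + L_i)_{i\in K}$.

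The principal obstacle is promoting the one-dimensional broadcast-time convergence of Theorem~\ref{the:BroadcastTime} to the uniform functional convergence of $R\n/(n p_n\lambda)$ used above. This decomposes into a kickoff phase, in which the finite-dimensional random shift $\alkusm$ of~\eqref{eq:KickOff} emerges through a near-Yule coupling of the early transmitter tree, and a bulk phase, where one invokes a law of large numbers for the average contact rate among informed transmitters---here the second-moment assumption in~\eqref{eq:Demographics3} permits trapping $R\n$ between upper and lower deterministic logistic envelopes. The auxiliary-clock coupling described in the first paragraph is a routine but necessary total-variation check for fixed $|K|$.
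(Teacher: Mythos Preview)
Your route is viable but genuinely different from the paper's, and it carries two loose ends that the paper's argument avoids entirely.

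The paper never Poissonizes or tracks $R\n(\cdot)$; it uses exchangeability. With $V_n = \lambda p_n \Thalf\n - \log(p_n n)$ and $\mathcal G\n = \sigma(S_n(t):t\ge 0)$, the key observation is that, conditional on $\mathcal G\n$, the set of informed nonroot labels at any time is a uniformly random subset of the correct size. Hence for $a_1\le\cdots\le a_k$,
\[
 \EE\Bigl[\,\prod_{i=1}^k \ind_{\{\tau_i \le \Thalf\n + a_i/(\lambda p_n)\}}\ \Big|\ \mathcal G\n\Bigr]
 \;=\; 0 \vee \prod_{i=1}^k\bigl(S_n(\Thalf\n + a_i/(\lambda p_n)) - (i-1)/n\bigr),
\]
which by Theorem~\ref{the:Logistic} converges in probability to $\prod_i S(a_i)$. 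Since $V_n$ is $\mathcal G\n$-measurable and $V_n\dto V$ by Theorem~\ref{the:BroadcastTime}, bounded convergence gives $\PP(A_n,\,V_n\le t)\to \PP(V\le t)\prod_i S(a_i)$. No auxiliary coupling, no functional statement about $R\n$, and the question of whether nodes in $K$ are transmitters never arises.

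On your proposal, two points need tightening. First, the shift in your logistic profile must be the \emph{finite-$n$} variable $V_n$ (equivalently $\lambda p_n\Thalf\n-\log(np_n)$), not the distributional limit $V$, which does not live on the same probability space; you then need joint convergence of $(V_n,\uexp_1,\ldots,\uexp_k)$ to an independent $(V,\uexp_1,\ldots,\uexp_k)$, which is exactly where the work lies. Second, the identity $\tau_i\n=(\Lambda\n)^{-1}(\uexp_i)$ with $\uexp_i$ independent of $R\n(\cdot)$ fails as stated whenever some $i\in K$ is itself a transmitter, since $\tau_i$ then feeds back into $R\n$; you must either excise $K$ from the environment and control the $O_P(1)$ perturbation, or sidestep the issue as the paper does. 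Your functional claim for $R\n/(np_n\lambda)$ is essentially Theorem~\ref{the:Logistic} combined with Lemma~\ref{lem:An}, so it is within reach, but the exchangeability argument gets there with less scaffolding.
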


\subsection{Proportion of informed nodes}

The proof of Theorem~\ref{the:FirstPassageTimes} utilizes the following auxiliary result which may
be interesting in its own right.

\begin{theorem}
\label{the:Logistic}
Assume that the population demographics parameterized by $(z_0,p_n,F)$ satisfies
\eqref{eq:Demographics1}--\eqref{eq:Demographics3}.
Then the proportion of informed nonroot nodes
\[
 \prop_n(t) = \frac{1}{n} \sum_{k=1}^n \Phi_k\n(t)
\]
can be approximated by the logistic distribution function 
\[
 \logis(t) = \frac{e^t}{e^t+1}
\]
according to
\[
 \sup_{t \in \RR} \left| \prop_n(\Thalf\n + t) - \logis(\lambda p_n t) \right| \ \pto \ 0.
\]
\end{theorem}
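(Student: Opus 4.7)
The plan is to identify the logistic ODE $\dot\phi=\phi(1-\phi)$ as the hydrodynamic limit of $\prop_n$ on the time scale $1/(\lambda p_n)$, and then to upgrade pointwise convergence to uniform convergence on $\RR$ via P\'olya's monotonicity theorem. Throughout, write $M_n(t)=n\prop_n(t)$ for the number of informed nonroot nodes and $\Lambda_n(t)=\sum_{i=0}^n \theta_i Z_i\,\Phi_i\n(t)$ for the aggregate contact rate of informed transmitters.

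First, I would exploit the exchangeability baked into the model. Because every contact chooses its target uniformly at random independently of the target's label, the order in which nonroot nodes become informed is a uniformly random permutation of $\{1,\dots,n\}$, independent of the demographic labels $(\theta_i,Z_i)_{i\ge 1}$. Writing $(\theta_{(k)},Z_{(k)})$ for the label of the $k$-th infected nonroot node, these are i.i.d.\ with mean $\lambda p_n$ and second moment at most $p_n\,\E[Z^2]$, which is finite by~\eqref{eq:Demographics3}, and
$$\Lambda_n(t) \;=\; z_0 + \sum_{k=1}^{M_n(t)} \theta_{(k)}\,Z_{(k)}.$$
Doob's maximal inequality applied to the centered random walk $\sum_{k=1}^{m}(\theta_{(k)}Z_{(k)}-\lambda p_n)$, together with $np_n\to\infty$, yields for every $\delta>0$
$$\sup_{t\ge 0}\;\ind\{M_n(t)\ge \delta n\}\,\Big|\tfrac{\Lambda_n(t)}{\lambda p_n M_n(t)}-1\Big| \;\pto\; 0.$$

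Second, the generator of $\Phi\n$ gives the standard Dynkin/martingale decomposition
$$M_n(t) \;=\; \int_0^t \Lambda_n(s)\,\frac{n-M_n(s)}{n}\,ds + N_n(t),$$
with $\langle N_n\rangle(t)\le \int_0^t \Lambda_n(s)\,ds$. Inserting the drift concentration above, dividing by $n$, and rescaling time by $\tau:=\lambda p_n(t-\Thalf\n)$, the process $\phi_n(\tau):=\prop_n(\Thalf\n+\tau/(\lambda p_n))$ satisfies
$$\phi_n(\tau) \;=\; \phi_n(0) + \int_0^\tau \phi_n(\sigma)(1-\phi_n(\sigma))\,d\sigma + \varepsilon_n(\tau),$$
where $\phi_n(0) = 1/2 + O(1/n)$ by the definition of $\Thalf\n$ and $\sup_{|\tau|\le T}|\varepsilon_n(\tau)|\pto 0$, using Doob on $N_n/n$ together with the bound $\E\langle N_n\rangle(t)=O(n)$ on the rescaled window. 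Gr\"onwall's lemma against the unique solution $\logis$ of the logistic ODE with $\logis(0)=1/2$ then yields $\phi_n(\tau)\pto \logis(\tau)$ uniformly on each compact set.

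Finally, since each $\phi_n$ is nondecreasing with values in $[0,1]$ and $\logis$ is a continuous distribution function with limits $0$ and $1$ at $\pm\infty$, P\'olya's theorem converts pointwise (in probability) convergence on $\RR$ into $\sup_{\tau\in\RR}|\phi_n(\tau)-\logis(\tau)|\pto 0$, which is the statement of the theorem after substituting $\tau=\lambda p_n t$. The main obstacle will be the uniform-in-time drift concentration near the tails $\tau\to\pm\infty$, where $M_n$ is small and the random-walk approximation of $\Lambda_n$ is poor in a relative sense; P\'olya's monotonicity step was included precisely so that these tails do not need to be controlled quantitatively, as both $\phi_n$ and $\logis$ are trapped near the endpoints $\{0,1\}$ there. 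The independence of the infection order from the demographic labels, on which the random-walk representation of $\Lambda_n$ rests, must however be justified carefully, e.g.\ by an explicit induction on the sequence of infection events.
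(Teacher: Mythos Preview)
Your approach is valid and genuinely different from the paper's. The paper never writes down the martingale decomposition or the logistic ODE; instead it works with the \emph{inverse} process. Using the explicit representation $T_k\n = \sum_{j=0}^{k-1} \frac{n}{n-j}\,\xi_j/R_j\n$ together with the concentration of $R_k\n$ around $\lambda p_n k$, a direct mean/variance computation (Lemmas~\ref{lem:F}--\ref{lem:H}) shows $\lambda p_n\bigl(T_{\lfloor S(t)n\rfloor}\n - \Thalf\n\bigr)\pto t$ for each fixed $t$, and $\prop_n(\Thalf\n+t/(\lambda p_n))\pto S(t)$ then follows by inverting the monotone relation between $\prop_n$ and the hitting times. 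The sum representation makes this elementary but leans on the specific Markov structure of the model; your fluid-limit route is more conceptual and would port more readily to variants where no such representation is available. Both proofs finish with the same P\'olya-type monotonicity step to upgrade pointwise to uniform convergence.

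One small gap to watch: the drift concentration you state in your first step is \emph{multiplicative} and carries the restriction $M_n(t)\ge \delta n$. On a compact window $[-T,0]$ with $T$ large you do not know a priori that $\phi_n(-T)\ge\delta$, so the restriction may bite before P\'olya can help---P\'olya only absorbs the true tails $\tau\to\pm\infty$, not fixed compact intervals. The fix is to use the \emph{additive} bound instead: the same maximal inequality gives
\[
 \sup_{t\ge 0}\Big|\tfrac{\Lambda_n(t)}{\lambda p_n n}-\tfrac{M_n(t)}{n}\Big|
 \;\le\; \tfrac{z_0}{\lambda p_n n}+\tfrac{1}{\lambda p_n n}\max_{1\le m\le n}\Big|\sum_{k=1}^m(\theta_{(k)}Z_{(k)}-\lambda p_n)\Big|
 \;=\; O_P\bigl((np_n)^{-1/2}\bigr),
\]
uniform in $t$ without any lower bound on $M_n$. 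This feeds directly into your error term $\varepsilon_n$ and makes the Gr\"onwall step clean on every compact $[-T,T]$.
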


\section{Analysis}
\label{sec:Analysis}

We omit the floor notation in this section if no confusion is possible. This means that when a number like $n^\beta$ appears as an index or in a place where an integer is required it is implicitly rounded down to $\floor{n^\beta}$. We say that an event $A_n$ depending on $n$ occurs \emph{with high probability} if $\pr(A_n) \to 1$ as $n \to \infty$. Also, we write $X \eqst Y$ when $X$ and $Y$ have the same distribution, and $X_n = Y_n + o_P(1)$ when $X_n - Y_n \pto 0$ as $n \to \infty$.

\subsection{Representation of transmission times}

The time of the $k$-th successful message transmission can be written as
\[
 T_k\n = \inf\left\{ t \ge 0: \sum_{i=0}^n \Phi_i^{(n)}(t) = k+1 \right\},
\]
with the convention that $T^{(n)}_0 = 0$. From now on we will assume that the nodes are labeled in the order in which they become informed. This assumption has no effect on the joint distribution of the transmission times because
$(Z_k,\theta^{(n)}_k)_{k=1}^n$ is an exchangeable sequence and nodes choose targets
independently among all nodes.  At time $t \in [ T_k\n,  T_{k+1}\n)$ the total spreading
intensity equals
\begin{equation}
 \label{eq:SpreadingIntensity}
 R^{(n)}_k
 = z_0 + \sum_{j=1}^k \theta^{(n)}_j Z_j.
\end{equation}
Also, there are $n-k$ nodes yet to be informed, so the probability for the next connection to hit an uninformed node is $(n-k)/n$. Hence, conditional on  $(R_k\n)_{k=0}^{n}$, the interevent times $ T_{k+1}\n -  T_k\n$ are independent and exponentially distributed with rates $R_k\n \frac{n-k}{n}$. This reasoning allows us to represent the message transmission times as
\begin{equation}
 \label{eq:Tk}
 T^{(n)}_k \ = \ \sum_{j=0}^{k-1} \frac{n}{n-j} \frac{\xi_j}{R^{(n)}_j},
 \quad k=0,\dots,n,
\end{equation}
where $\xi_0,\xi_1,\dots$ are independent unit exponential random numbers, also independent of $\theta^{(n)}_i$ and $Z_i$, $i \ge 1$. With this representation, the broadcast time and the first half broadcast time can be expressed as
\[
 \Tsat^{(n)} = T^{(n)}_n
 \quad \text{and} \quad
 \Thalf^{(n)} = T^{(n)}_{\floor{n/2}}.
\]
\begin{remark*}
Strictly speaking, the equality in \eqref{eq:Tk} should be written as $\eqst$. Because our analysis is restricted to the distributions of the broadcast and first-passage times, we are free to choose the most convenient distributional representation for our needs, and in the future analysis we shall treat the variables $T^{(n)}_k$ as defined by \eqref{eq:Tk}.
\end{remark*}

\subsection{Concentration of total spreading intensity}

The following result confirms that the total spreading intensity $R^{(n)}_k$ defined by \eqref{eq:SpreadingIntensity} is well concentrated around its mean after sufficiently many message transmissions.

\begin{lemma}
\label{lem:An}
Assume that $p_n \gg n^{-2\alpha}$ for some $\alpha \in (0,\frac{1}{2})$. Let $c > 0$ and $\beta$ be
such that $\frac{1}{2} + \alpha < \beta < 1$. Then
\begin{equation}
 \label{eq:Claim}
 \left| \frac{R_k\n}{\lambda p_n k} - 1 \right| \le \frac{1}{\sqrt{p_n}n^\alpha }
 \quad \text{for all } c n^{\beta} \le k \le n
\end{equation}
with high probability as $n \to \infty$.
\end{lemma}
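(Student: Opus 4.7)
The plan is to decompose $R_k\n$ into a constant term plus a mean-zero martingale, and then apply Doob's maximal inequality dyadically on the index $k$. Write $R_k\n = z_0 + S_k\n$ with $S_k\n = \sum_{j=1}^k \theta_j\n Z_j$, and set $M_k = S_k\n - \lambda p_n k$. Then $M_k$ is a sum of independent mean-zero random variables whose per-term variance is at most $\E[(\theta_j Z_j)^2] = p_n \E[Z^2] =: p_n \sigma^2$, so $\Var(M_k) \le k p_n \sigma^2$, and $\sigma^2 < \infty$ by \eqref{eq:Demographics3}.

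First dispose of the $z_0$ contribution: for $k \ge c n^\beta$ we have $z_0/(\lambda p_n k) \le z_0/(\lambda c p_n n^\beta)$. The assumptions $p_n \gg n^{-2\alpha}$ and $\beta > \tfrac{1}{2} + \alpha > 2\alpha$ imply $\sqrt{p_n}\, n^{\beta - \alpha} \gg n^{\beta - 2\alpha} \to \infty$, i.e.\ $1/(p_n n^\beta) \ll 1/(\sqrt{p_n} n^\alpha)$, so the $z_0$ term is eventually below the allowed tolerance uniformly in $k \ge cn^\beta$.

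For the martingale part, cover $[c n^\beta, n]$ by dyadic blocks $B_i = [2^i, 2^{i+1})$ with $i_0 \le i \le \lceil \log_2 n\rceil$ where $i_0 = \lceil \log_2(c n^\beta)\rceil$. Inside $B_i$ the allowed absolute error $\lambda \sqrt{p_n}\, k / n^\alpha$ exceeds $\lambda \sqrt{p_n}\, 2^i / n^\alpha$, so by Doob's $L^2$ maximal inequality applied to the martingale $(M_k)_{k \le 2^{i+1}}$,
\[
 \pr\!\left( \max_{k \le 2^{i+1}} |M_k| \ge \tfrac{1}{2}\lambda \sqrt{p_n}\, 2^i / n^\alpha \right)
 \ \le \ \frac{2^{i+1} p_n \sigma^2}{(\tfrac{1}{2}\lambda \sqrt{p_n} \, 2^i/n^\alpha)^2}
 \ = \ \frac{C \sigma^2 n^{2\alpha}}{\lambda^2 \, 2^i},
\]
for a numerical constant $C$. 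Summing this geometric series over $i \ge i_0$ gives a total failure probability of order $n^{2\alpha}/n^\beta = n^{2\alpha - \beta} \to 0$, because $\beta > 2\alpha$. On the complementary (high-probability) event, every $k \in B_i$ satisfies $|M_k|/k \le \lambda \sqrt{p_n}/n^\alpha$, which is exactly the claimed relative bound (the factor of $\tfrac{1}{2}$ absorbs the factor $2$ lost between $k$ and $2^{i+1}$).

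The only delicate point is the balancing of exponents: a naive union bound over all $k$ would pick up an extra $\log n$ factor that would ruin the estimate, so the dyadic truncation is essential. With that in hand, all remaining calculations are routine second-moment bounds using the assumption $\E[Z^2] < \infty$.
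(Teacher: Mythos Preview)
Your proof is correct, but it takes a somewhat more elaborate route than the paper's. The paper does not use a dyadic decomposition at all: it simply observes that the required absolute deviation $\lambda\sqrt{p_n}\,k/n^{\alpha}$ is smallest at $k=cn^{\beta}$, replaces it by this worst-case value $\lambda c\sqrt{p_n}\,n^{\beta-\alpha}$ uniformly, and then applies Kolmogorov's maximal inequality \emph{once} to $\max_{1\le k\le n}|M_k|$. This yields a bound of order $p_n n \,\E Z^2 / (p_n n^{2(\beta-\alpha)})=n^{1-2(\beta-\alpha)}$, which tends to zero precisely because $\beta>\tfrac12+\alpha$. So your remark that ``the dyadic truncation is essential'' is not accurate: one application of the maximal inequality with a uniform threshold already suffices, and this is exactly what the paper does.

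That said, your approach is not wasted effort. The dyadic argument gives a failure probability of order $n^{2\alpha-\beta}$, which vanishes under the weaker hypothesis $\beta>2\alpha$ rather than $\beta>\tfrac12+\alpha$; in other words, your argument would still work in regimes where the paper's single-threshold bound does not. Under the stated assumptions both methods succeed, and the paper's is shorter.
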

\begin{proof}
Denote the event in \eqref{eq:Claim} by $A_n$. Then
\begin{equation}
 \label{eq:Ancomp}
 \begin{aligned}
 \PP(A_n^c)
 &\le \PP\left(  |R_k\n - \lambda p_n k | > \lambda c \sqrt{p_n}n^{\beta-\alpha} \text{ for some }  c n^{\beta} \le k \le n\right) \\
 &\le \PP\left( \max_{1\le k \le n}  |R_k\n - \lambda p_n k| > \lambda c \sqrt{p_n}n^{\beta-\alpha} \right) \\
 &\le \PP\left(\max_{1\le k \le n} \Big | \sum_{i=1}^k \left(\uint_i\theta_i\n - \lambda p_n\right) \Big | > \lambda c \sqrt{p_n}n^{\beta-\alpha} - Z_0 \right).
 \end{aligned}
\end{equation}
Because
\[
 \Var \left(\sum_{i=1}^n \uint_i\theta_i\n \right)
 = n \left( \E (Z_1 \theta_1^{(n)})^2 - (\lambda p_n)^2 \right)
  \le  p_n n \E Z_1^2,
\]
we see by applying Kolmogorov's maximal inequality to the last term of \eqref{eq:Ancomp} that
\[
 \pr(A_n^c)
 \le
 \frac{p_n n \E Z_1^2}{\left( \lambda c \sqrt{p_n}n^{\beta-\alpha} - Z_0 \right)^2}.
\]
This shows that $\pr(A_n^c) \to 0$ because $p_n n \ll p_n n^{2(\beta-\alpha)}$ and we assumed that $\E Z_1^2 < \infty$.
\end{proof}

For normalized information propagation times 
we will use the notation $\Rsum(l,m) = \lambda p_n (T_m\n - T_l\n)$. 
Notice that it follows from \eqref{eq:Tk} that 
\begin{equation}
 \label{eq:Rsum}
 \Rsum(l,m) = \lambda p_n \sum_{k=l}^{m-1} \frac{n}{n-k} \frac{\xi_k}{R_k\n}.
\end{equation}
Now let
\[
 \Rsumm(l,m) = \sum_{k=l}^{m-1} \frac{n}{n-k} \frac{\xi_k}{k},
\]
which corresponds to replacing $R_k^{(n)}$ by $\lambda p_n k$ in~\eqref{eq:Rsum}. This leads us to the next lemma which is a natural corollary to Lemma \ref{lem:An}.

\begin{lemma}
\label{lem:An2}
Assume that $p_n \gg n^{-2\alpha}$ for some $\alpha \in (0,\frac{1}{2})$, and let $\beta$ be
such that $\frac{1}{2} + \alpha < \beta < 1$. Then there exists a number $\delta > 0$ such that 
\[
 \left| \frac{\Rsum(l,m)}{\Rsumm(l,m)} - 1 \right|
 \ \le \ n^{-\delta}
 \quad \text{for all } n^{\beta} \le l < m \le n
\]
with high probability. In particular,
\[
 n^{\delta/2} \left( \frac{\Rsum(a_n,b_n)}{\Rsumm(a_n,b_n)} - 1 \right) \ \pto \ 0
\]
for any integer sequences $(a_n)$ and $(b_n)$ such that $n^\beta \le a_n < b_n \le n$.
\end{lemma}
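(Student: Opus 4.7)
My plan is to reduce the claim to Lemma~\ref{lem:An} by exploiting the slack in the hypothesis $\beta > \frac{1}{2} + \alpha$. I would first pick an auxiliary exponent $\alpha' \in (\alpha, \beta - \frac{1}{2})$. Since $\alpha' > \alpha$ gives $n^{-2\alpha'} \le n^{-2\alpha}$ for large $n$, the assumption $p_n \gg n^{-2\alpha}$ also implies $p_n \gg n^{-2\alpha'}$. Applying Lemma~\ref{lem:An} with exponents $(\alpha', \beta)$ and constant $c = 1$ then yields a high-probability event $A_n$ on which
\[
 \left|\frac{R_k\n}{\lambda p_n k} - 1\right| \le \frac{1}{\sqrt{p_n}\, n^{\alpha'}}
 \quad \text{for all } n^\beta \le k \le n.
\]
Because $p_n \gg n^{-2\alpha}$ means $\sqrt{p_n}\, n^\alpha \to \infty$, we have $\sqrt{p_n} \ge n^{-\alpha}$ for large $n$, so the right side is at most $n^{-\delta_0}$ with $\delta_0 := \alpha' - \alpha > 0$.

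Inverting (for $n$ large enough that $n^{-\delta_0} < \tfrac{1}{2}$) gives $|\lambda p_n k / R_k\n - 1| \le 2 n^{-\delta_0}$ on $A_n$ for the same range of $k$. From the representation~\eqref{eq:Rsum} I would then write
\[
 \Rsum(l,m) - \Rsumm(l,m) = \sum_{k=l}^{m-1} \frac{n}{n-k}\frac{\xi_k}{k}\left(\frac{\lambda p_n k}{R_k\n} - 1\right),
\]
and, since every coefficient $\frac{n}{n-k}\xi_k/k$ is nonnegative, the triangle inequality yields
\[
 |\Rsum(l,m) - \Rsumm(l,m)| \le 2 n^{-\delta_0}\, \Rsumm(l,m)
\]
on $A_n$, uniformly in $n^\beta \le l < m \le n$. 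Dividing through by $\Rsumm(l,m) > 0$ and replacing $\delta_0$ by any smaller $\delta > 0$ to absorb the constant $2$ proves the uniform ratio bound. The ``in particular'' statement follows at once: on $A_n$ we have $n^{\delta/2}|\Rsum(a_n,b_n)/\Rsumm(a_n,b_n) - 1| \le n^{-\delta/2} \to 0$, whence convergence in probability.

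The only delicate point is the opening enlargement of $\alpha$. Lemma~\ref{lem:An} applied directly with the given $\alpha$ yields a rate $(\sqrt{p_n}\, n^\alpha)^{-1}$ that tends to zero but possibly subpolynomially (for instance if $p_n = n^{-2\alpha}\log\log n$). The slack $\beta > \frac{1}{2} + \alpha$ is precisely what lets us push $\alpha$ up to $\alpha'$ and convert this qualitative rate into the polynomial rate $n^{-\delta}$ required by the statement.
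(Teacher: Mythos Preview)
Your proof is correct and follows essentially the same route as the paper: choose $\alpha' \in (\alpha,\beta-\tfrac12)$, invoke Lemma~\ref{lem:An} with $\alpha'$ in place of $\alpha$ to get a termwise bound $|r_{k,n}^{-1}-1|\le 2n^{-(\alpha'-\alpha)}$, and then pass this through the positive-coefficient sum to bound the ratio $\Rsum/\Rsumm$. Your exposition is in fact slightly more explicit than the paper's, which compresses the passage from the termwise inverse bound to the sum-ratio bound into the phrase ``This implies the claim''; and your closing remark correctly isolates why the slack $\beta>\tfrac12+\alpha$ is essential for obtaining a polynomial rate.
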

\begin{proof}
Choose $\alpha' > \alpha$ such that  $\alpha' + \frac{1}{2} < \beta$, and let $\delta = \frac{\alpha'-\alpha}{2}$. Then $c_n := \frac{1}{\sqrt{p_n} n^{\alpha'}} \ll n^{-\delta} \ll 1$. Let $A_n$ be the event that the ratio $r_{k,n} = \frac{R_k\n}{\lambda p_n k}$ satisfies
\[
 \left | r_{k,n} - 1 \right| \le c_n
\]
for all $n^\beta \le k \le n$. Because $c_n \le 1/2$ and $2 c_n \le n^{-\delta}$ for large $n$, it follows that on the event $A_n$, the corresponding inverse ratio is bounded by
\[
 \left| r_{k,n}^{-1} - 1 \right|
 = \frac{|r_{k,n} - 1|}{r_{k,n}}
 \le \frac{c_n}{1- c_n} \le 2 c_n \le n^{-\delta}.
\]
This implies the claim because $\pr(A_n) \to 1$ by Lemma~\ref{lem:An}.
\end{proof}

\subsection{Yule process in a random environment}
\label{subsec:Yule}

In the initial phase in a large population the first contacts are very likely to hit uninformed targets. If we assume
that all nodes are transmitters, we expect that the number of informed nodes in the initial phase will closely resemble the
following Yule process in a random environment. We will later show that the random number $V$ describing the long-run random factor in the Yule process will also characterize the randomness of the general spreading process in the initial phase until $n^\beta$ nodes are informed, with some $\beta \in (1/2,1)$.

Let $z_0>0$ be nonrandom and $Z_1,Z_2,\dots$ independent
nonnegative random numbers with a common distribution $F$. We consider a birth process $N(t)$ on the positive integers
such that $N(0)=1$ and conditional on $(Z_1,Z_2,\dots)$ the process $N$ is Markov where transition $k \mapsto k+1$ occurs at rate $z_0+Z_1+\cdots Z_{k-1}$.
This models the size of a population where all individuals live forever, initially there is one individual producing children at rate $z_0$,
and the $k$-th born individual produces children at rate $Z_k$. Let $T_k$ be the time of the $k$-th birth, and denote $\xi_k = \lambda(T_{k}-T_{k-1})J_k$,
where
\begin{equation}
 \label{eq:JDef}
 J_k = \lambda^{-1} \left( z_0 + \sum_{j=1}^{k-1} Z_j \right), \quad k \ge 1.
\end{equation}
Then the time at which the $m$-th birth occurs can be written as
\[
 T_m = \lambda^{-1} \sum_{k=1}^{m} \frac{\xi_k}{J_k},
\]
and the Markov assumptions imply that $\xi_1,\xi_2,\dots$ are unit exponential random numbers independent of each other and the sequence  $(Z_1,Z_2,\dots)$.

\begin{proposition}
\label{the:Yule}
Assume that $\lambda = \int z F(dz) > 0$ and $\int z \, |\log z| \, F(dz) < \infty$. Then $\lambda T_m - \log m \to V$ almost surely, where the limit  can be represented as
\[
  V = \gamma + \sum_{k=1}^{\infty} \left( \frac{\uexp_k}{J_k} - \frac{1}{k} \right),
\]
where $\gamma \approx 0.577$  is the Euler--Mascheroni constant.
\end{proposition}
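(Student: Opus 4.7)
Plan. Starting from $\lambda T_m = \sum_{k=1}^m \xi_k/J_k$, I would insert the harmonic number $H_m = \sum_{k=1}^m 1/k$ to write
\[
 \lambda T_m - \log m
 \,=\, \sum_{k=1}^m \frac{\xi_k - 1}{J_k}
      \,+\, \sum_{k=1}^m \left(\frac{1}{J_k} - \frac{1}{k}\right)
      \,+\, (H_m - \log m).
\]
The last term converges deterministically to $\gamma$, so it suffices to establish almost-sure convergence of the two random series; reassembling them with $\gamma$ will then yield the claimed formula for $V$. It is natural to work conditionally on $\mG = \sigma(Z_1, Z_2, \dots)$, since the first series has conditionally zero mean and the second is $\mG$-measurable.

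For the fluctuation series, the terms $(\xi_k - 1)/J_k$ are, conditional on $\mG$, independent with mean zero and variance $1/J_k^2$. Since $\E Z_1 = \lambda > 0$, the strong law yields $J_k/k \to 1$ almost surely, and hence $\sum_k 1/J_k^2 < \infty$ a.s. Kolmogorov's convergence theorem then gives a.s.\ convergence of $\sum_k (\xi_k-1)/J_k$ on an event of full conditional probability; Fubini lifts this to unconditional a.s.\ convergence.

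The bias series is the main obstacle. With $\tilde Z_i = Z_i - \lambda$ and $M_{k-1} = \sum_{i=1}^{k-1}\tilde Z_i$ one has $k - J_k = \lambda^{-1}(\lambda - z_0) - \lambda^{-1}M_{k-1}$, so the deterministic-in-$k$ part of the numerator contributes an absolutely convergent $O(1/k^2)$ tail (using $J_k \ge k/2$ eventually, by the SLLN) and the task reduces to a.s.\ convergence of $\sum_k M_{k-1}/(\lambda k J_k)$. I would interchange the order of summation to express the $N$-th partial sum as $\sum_{i=1}^{N-1}\tilde Z_i B_i^{(N)}$ with $B_i^{(N)} = \sum_{k=i+1}^N (\lambda k J_k)^{-1}$. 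The tail $B_i - B_i^{(N)}$ is independent of $i$ and is $O(1/N)$ a.s., so the boundary term vanishes against $M_{N-1} = o(N)$ (SLLN) and one is left with $\sum_i \tilde Z_i B_i$ where $B_i \sim 1/(\lambda i)$ a.s. Comparing with $(\lambda i)^{-1}$, the problem reduces to a.s.\ convergence of the centered i.i.d.\ series $\sum_i \tilde Z_i /i$. By Kolmogorov's three-series theorem this holds once $\E|\tilde Z_1|\log^+|\tilde Z_1|<\infty$ (Fubini turns condition (ii) into precisely this $L\log L$ moment), which follows from $\int z|\log z|\,F(dz) < \infty$ combined with $\E Z_1 < \infty$. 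The delicate point I expect to spend the most effort on is verifying that the remainder $\sum_i \tilde Z_i(B_i - (\lambda i)^{-1})$ is a.s.\ absolutely summable using only SLLN-level information on $M_k$, since $\E Z_1^2$ is not assumed.

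Putting the three contributions together,
\[
 V \,=\, \gamma + \sum_{k=1}^\infty\left(\frac{\xi_k-1}{J_k} + \frac{1}{J_k} - \frac{1}{k}\right)
   \,=\, \gamma + \sum_{k=1}^\infty\left(\frac{\xi_k}{J_k} - \frac{1}{k}\right),
\]
which is the stated representation of $V$.
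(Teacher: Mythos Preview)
Your three-term decomposition is exactly the one the paper uses. The paper's proof is then very short: it treats $A_m=\sum_{k\le m}(\xi_k-1)/J_k$ as an $L^2$-bounded martingale by bounding $\E A_m^2=\sum_k\E(1/J_k^2)\le c\sum_k k^{-2}$ with $c=\sup_k\E(k/J_k)^2<\infty$ taken from Athreya--Karlin (1967, Lemma~3), and it dispatches the bias series $\sum_k(1/J_k-1/k)$ in a single line by citing Athreya--Karlin (1967, Theorem~1). Your conditional treatment of the fluctuation series (SLLN gives $\sum_k 1/J_k^2<\infty$ a.s., then Kolmogorov's theorem conditionally on $\mG$, then Fubini) is correct and is a clean self-contained substitute for the first citation.

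For the bias series, however, the step you flag as ``delicate'' is a genuine gap, and in the form you propose it cannot be closed. The SLLN yields only $M_k=o(k)$, hence only $|B_i-(\lambda i)^{-1}|=o(1/i)$ a.s.; but $\sum_i|\tilde Z_i|/i=+\infty$ a.s.\ whenever $\E|\tilde Z_1|>0$, so an $o(1/i)$ bound cannot force $\sum_i|\tilde Z_i|\,|B_i-(\lambda i)^{-1}|<\infty$. A direct three-series or martingale argument for the remainder is also blocked, since $B_i$ depends on $Z_i,Z_{i+1},\dots$ through $J_{i+1},J_{i+2},\dots$, so $\tilde Z_i$ and $B_i$ are dependent and there is no filtration making $\tilde Z_i B_i$ a martingale difference. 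In short, the almost-sure convergence of $\sum_k(1/J_k-1/k)$ under the hypothesis $\int z|\log z|\,F(dz)<\infty$ is precisely the content of Athreya--Karlin's Theorem~1; your outline does not yet provide an independent proof of it, and you will either need to invoke that result, as the paper does, or reproduce its argument.
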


Note that when $z_0=1$ and $F$ has all its mass at one, $V$ has standard Gumbel distribution. Note
also that $t |\log t| \le 1 \vee t^2$ for all $t \ge 0$, which shows that $\E Z |\log Z| < \infty$ whenever $Z$ has a finite second moment.

\begin{proof}
Write
\begin{equation}
 \label{eq:Yule}
  \lambda T_m
  = \sum_{k=1}^{m} \frac{\uexp_k}{J_k}
  \ = \ \sum_{k=1}^{m} \frac{\uexp_k-1}{J_k}
    + \sum_{k=1}^{m} \left( \frac{1}{J_k}-\frac{1}{k} \right ) + \sum_{k=1}^{m} \frac{1}{k}.
\end{equation}
Denote by $A_m$ the first sum on the right of \eqref{eq:Yule}. By noting that $0 < J_k^{-1} \le \lambda/z_0$ for all $k \ge 1$, we see that the process $m \to A_m$ is a martingale with mean zero and variance
\[
 \E A_m^2
 = \sum_{k=1}^{m} \E \left( \frac{\xi_k-1}{J_k} \right)^2
 = \sum_{k=1}^{m} \E \left( \frac{1}{J_k} \right)^2
 \le c \sum_{k=1}^{m} \frac{1}{k^2},
\]
where the constant $c = \sup_{k \ge 1} \E \left( \frac{k}{J_k} \right)^2$ is finite by \cite[Lemma 3]{Athreya_Karlin_1967}.
We conclude that $A_m$ is a martingale bounded in square mean and thus converges almost surely.

By \cite[Thm.~1]{Athreya_Karlin_1967}, the second sum on the right of \eqref{eq:Yule} converges almost surely. The claim thus follows from the fact that $\sum_{k=1}^{m} \frac{1}{k} - \log m \to \gamma$.
\end{proof}

\subsubsection{Thinned Yule process}
\label{sec:ThinnedYule}
For future needs it will be helpful to analyze a modification of the above Yule process where only a fraction of all individuals are able to reproduce. Fix some nonrandom constants $p \in [0,1]$ and $z_0>0$, and let $Z_1,Z_2,\dots$ independent
nonnegative random numbers with a common distribution $F$. Also, let $\theta_1,\theta_2,\dots$ be independent $\Ber(p)$-distributed random integers, and let $\xi_0,\xi_1,\dots$ be independent unit exponential random numbers. We also assume that the sequences $(Z_i), (\theta_i), (\xi_i)$ are independent. Define
\[
 R_k = z_0 + \sum_{j=1}^k \theta_j Z_j, \quad k \ge 0,
\]
and
\[
 T_m = \sum_{\ell=0}^{m-1} \frac{\xi_\ell}{R_\ell}, \quad m \ge 0,
\]
with the convention that an empty sum is zero. Then we can interpret $T_m$ as the time until the $m$-th birth takes place in a population where all individuals live forever, initially there is one individual that produces offspring at rate $z_0$, the $k$-th born individual tries to produce offspring at rate $Z_k$, and the $k$-th born individual is able to produce offspring if and only if $\theta_k = 1$. Note that $\theta_k=1$ for all $k$ almost surely in the special case where $p=1$, and then this model reduces back to the previously defined Yule process.

If we only care about the individuals that are able to produce offspring, we may define
\[
 \hat T_m = T_{D_m}, \quad m \ge 0,
\]
where
\[
 D_m = \inf\{k \ge 0: \sum_{j=1}^k \theta_j = m\}, \quad m \ge 0.
\]
When the individuals are labeled in the order that they are born, $D_m$ indicates the label of the $m$-th born individual among those able to reproduce. The following result shows how the birth times of the reproduction-capable individuals can be analyzed using the same formula as in Proposition~\ref{the:Yule}.

\begin{lemma}
\label{the:Yule2}
For any integer $m \ge 0$,
\[
 \hat T_m \eqst (\lambda p)^{-1} \sum_{\ell=1}^{m} \frac{\xi_\ell}{J_\ell},
\]
where $(J_1,J_2,\dots)$ is defined by \eqref{eq:JDef} and independent of $(\xi_1,\xi_2,\dots)$.
\end{lemma}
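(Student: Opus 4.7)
My plan is to decompose $\hat T_m = T_{D_m}$ into blocks indexed by the successive reproduction-capable individuals. Set $D_0 = 0$ and $N_k = D_k - D_{k-1}$, so that $(N_k)_{k \ge 1}$ is iid $\Geo(p)$ on $\{1,2,\dots\}$. The crucial observation is that $R_\ell$ stays constant between two consecutive active indices: for $D_{k-1} \le \ell < D_k$ every intermediate $\theta_j$ vanishes, and thus
\[
 R_\ell \ = \ R_{D_{k-1}} \ = \ z_0 + \sum_{j=1}^{k-1} Z_{D_j}.
\]
Regrouping the defining sum for $T_{D_m}$ block by block then yields
\[
 \hat T_m \ = \ \sum_{k=1}^m \frac{S_k}{R_{D_{k-1}}},
 \qquad S_k := \sum_{\ell = D_{k-1}}^{D_k - 1} \xi_\ell.
\]

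Next I would determine the joint law of $(S_k)$ and $(R_{D_{k-1}})$. Conditional on $\theta$, the $S_k$'s are functions of disjoint blocks of $\xi$'s and the $R_{D_{k-1}}$'s are functions of $Z$ alone, so by mutual independence of $(\xi)$, $(Z)$, $(\theta)$ the two families are conditionally independent. Since $(Z_j)$ is iid $F$ and independent of $\theta$, the thinned sequence $(Z_{D_j})$ is iid $F$ regardless of $\theta$, which both gives $(R_{D_{k-1}})_{k \ge 1} \eqst (\lambda J_k)_{k \ge 1}$ and upgrades the conditional independence to unconditional independence of the two families. A standard Laplace-transform computation, using that a $\Geo(p)$-indexed sum of iid $\Exp(1)$ variables is $\Exp(p)$-distributed, then shows that the $(S_k)$ are iid $\Exp(p)$, so $S_k \eqst \xi_k/p$ for fresh iid unit exponentials $(\xi_k)$ independent of $(J_k)$. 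Substituting into the block decomposition gives
\[
 \hat T_m \ \eqst \ \sum_{k=1}^m \frac{\xi_k/p}{\lambda J_k} \ = \ (\lambda p)^{-1} \sum_{k=1}^m \frac{\xi_k}{J_k},
\]
which is the desired identity.

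The only genuinely nontrivial ingredient is the geometric-sum-of-exponentials identity combined with the verification that the resulting $\Exp(p)$ sequence remains jointly independent of $(J_k)$; the rest is pure block-decomposition bookkeeping, with the main conceptual point being that inactive indices contribute nothing to $R_\ell$ and therefore only rescale the waiting time between successive active individuals by a geometric factor.
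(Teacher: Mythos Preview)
Your proof is correct and follows essentially the same route as the paper: block-decompose $T_{D_m}$ along successive active indices, use that $R_\ell$ is constant on each block so that $\hat T_m=\sum S_k/R_{D_{k-1}}$, identify each $S_k$ as a $\Geo(p)$-sum of unit exponentials (hence $\Exp(p)$), and note that the thinned sequence $(Z_{D_j})$ is i.i.d.\ $F$ and independent of the $S_k$'s, giving $(R_{D_{k-1}})\eqst(\lambda J_k)$. The only difference from the paper is cosmetic indexing (the paper writes $\hat\xi_\ell$, $\hat R_\ell$ with $\ell=0,\dots,m-1$ where you write $S_k$, $R_{D_{k-1}}$ with $k=1,\dots,m$).
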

\begin{proof}
Note that $R_k$ equals the net reproduction rate after $k$ births. Because this net rate only includes contributions from those individuals able to reproduce, we see that  $R_k = R_{D_\ell}$ for $D_\ell \le k < D_{\ell+1}$. Here $R_{D_\ell}$ equals the net reproduction rate after $\ell$ reproduction-capable individuals have been born. Because these individuals are indexed by $D_1,\dots,D_\ell$, we see that
\[
 R_{D_\ell} = z_0 + \sum_{k=1}^\ell Z_{D_k}.
\]
By denoting $\hat R_\ell = R_{D_\ell}$ and $\hat Z_\ell = Z_{D_\ell}$, this can be rephrased as
\[
 \hat R_{\ell} = z_0 + \sum_{k=1}^\ell \hat Z_k.
\]
By expressing $\hat T_m$ as 
\[
 \hat T_m
 = \sum_{\ell = 0}^{m-1} ( \hat T_{\ell+1} - \hat T_\ell )
 = \sum_{\ell = 0}^{m-1} \sum_{k=D_\ell}^{D_{\ell+1}-1} \frac{\xi_k}{R_k}
 = \sum_{\ell = 0}^{m-1} \frac{1}{R_{D_\ell}} \left( \sum_{k=D_\ell}^{D_{\ell+1}-1} \xi_k \right),
\]
we find that
\[
 \hat T_m
 = \sum_{\ell = 0}^{m-1} \frac{\hat \xi_\ell}{\hat R_\ell},
\]
where $\hat \xi_\ell = \sum_{k=D_\ell}^{D_{\ell+1}-1} \xi_k$. Because $\hat \xi_\ell$ equals a random sum of independent unit exponential random numbers, and the number of terms in the sum is independent of the summands and follows a geometric distribution on $\{1,2,\dots\}$ with success probability $p$, it follows that the distribution of $\hat \xi_\ell$ is exponential with mean $p^{-1}$. The independence of $(\xi_i)$ and $(\theta_i)$ also implies that $\hat \xi_0, \hat \xi_1, \dots$ are independent, and hence $(\hat \xi_0, \hat \xi_1, \dots) \eqst (p^{-1} \xi_1, p^{-1} \xi_2, \dots)$.

To finish the proof, it suffices to verify that $(\hat R_0, \hat R_1, \dots) \eqst (\lambda J_1, \lambda J_2, \dots)$,
and that the sequences $(\hat Z_\ell)$ and $(\hat \xi_\ell)$ are independent. Both of these facts follow from the assumption that $(Z_i)$ forms an independent and identically distributed sequence that is independent of $(\theta_i)$ and $(\xi_i)$.
\end{proof}

\subsection{Broadcast time}

Now we will prove Theorem~\ref{the:BroadcastTime}. By assumption \eqref{eq:Demographics2} we may choose an $\epsilon > 0$ such that $p_n n \gg n^{\epsilon}$. Then necessarily $\epsilon \in (0,1)$ because $p_n \le 1$.
Let $\alpha = (1-\epsilon)/2$ and fix a number $\beta$ such that  $\frac{1}{2}+\alpha < \beta < 1.$ Then, using \eqref{eq:Tk}, write
\[
\begin{aligned}
 &\left( \lambda p_n \Thalf\n, \ \lambda p_n \ThalfL\n \right) \\
 = & \left( \Rsum(0,n^\beta)+\Rsum(n^\beta,n/2),\  \Rsum(n/2,n-n^\beta)+\Rsum(n-n^\beta,n) \right).
\end{aligned}
\]
Lemma \ref{lem:alkuosa} below shows that 
\[
 \Rsum(0,n^\beta) - \log (p_n n^\beta) 
 \ \dto \ \alkusm.
\]
In Lemma \ref{lem:keskiosa} below we show that 
\begin{equation}
\label{eq:Rsumm1}
\Rsumm(n^\beta,n/2) = (1-\beta)\log n + o_p(1)
\end{equation}
and
\begin{equation}
\label{eq:Rsumm2}
\Rsumm(n/2,n-n^\beta) = (1-\beta)\log n + o_p(1).
\end{equation}
In Lemma \ref{lem:Final} it is shown that
\begin{equation}
\label{eq:Rsumm3}
\Rsumm(n-n^\beta,n) -  \beta \log n \ \dto \gumbelsm.
\end{equation}

Equations \eqref{eq:Rsumm1}--\eqref{eq:Rsumm3} are true even if we replace $\Rsumm$ with $\Rsum$. This can be seen, for example, by writing

\begin{equation}\label{eq:RReplace}
	\begin{aligned}
	&\Rsum(n^\beta,n/2) - (1-\beta) \log n \\
= &\Rsumm(n^\beta,n/2) - (1-\beta) \log n + X_nY_n,
	\end{aligned}
\end{equation}
where 
\[
X_n = n^\delta\left(\frac{\Rsum(n^\beta,n/2)}{\Rsumm(n^\beta,n/2)}-1\right)
\]
and
\[
Y_n = \frac{\Rsumm(n^\beta,n/2)}{n^\delta}.
\]
For a suitably small $\delta > 0$ Lemma \ref{lem:An2} implies that $X_n = o_p(1)$. Also, it is clear from equation \eqref{eq:Rsumm1} that $Y_n = o_p(1)$. 
Then we can get the desired result by applying Slutsky's lemma (e.g \cite{VanDerVaart_2000} Lemma 2.8) to the last line of \eqref{eq:RReplace}. Equations \eqref{eq:Rsumm2} and \eqref{eq:Rsumm3} are handled in the same manner.

Finally, notice that $\Rsumm(n-n^\beta,n)$ is independent of $ \Rsum(0,n^\beta)$ so the joint convergence to the independent pair $(V,\gumbelsm)$ in Theorem~\ref{the:BroadcastTime} is true as well.

\begin{lemma} [Initial phase]
\label{lem:alkuosa}

Assume that $p_n \gg n^{-2\alpha}$ for some $\alpha \in (0,\frac{1}{2})$, and let $\beta$ be
such that $\frac{1}{2} + \alpha < \beta < 1$.  Then
\[
 \Rsum(0,n^\beta) - \log (p_n n^\beta) 
 \ \dto \ \alkusm.
\]
\end{lemma}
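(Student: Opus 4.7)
Proof plan. The key idea is that while only $k \le n^\beta = o(n)$ nodes are informed, each new contact hits an uninformed target with probability $(n-k)/n = 1 - O(n^{\beta-1})$, so the spreading dynamics essentially coincide with the thinned Yule process of Section~\ref{sec:ThinnedYule} driven by the same sequences $(\theta_i\n, Z_i, \xi_i)$.

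Using the representation~\eqref{eq:Rsum} I would split
\[
 \Rsum(0, n^\beta) \ = \ \lambda p_n \sum_{k=0}^{n^\beta-1} \frac{\xi_k}{R_k\n} \ + \ \lambda p_n \sum_{k=0}^{n^\beta-1} \frac{k}{n-k}\, \frac{\xi_k}{R_k\n}.
\]
The first term equals $\lambda p_n T_{n^\beta}^{\mathrm{Y}}$, where $T_m^{\mathrm{Y}}$ is the $m$-th birth time in the thinned Yule process of Section~\ref{sec:ThinnedYule} built from the same sequences. Because $k/(n-k) \le 2 n^{\beta - 1}$ for $k \le n^\beta$ and $n$ large, the second term is bounded by $2 n^{\beta - 1} \cdot \lambda p_n T_{n^\beta}^{\mathrm{Y}}$. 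Once the first term is shown to be $O_p(\log n)$ (a byproduct of the argument below), this error is $o_p(1)$.

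Next I would reduce the ``all-births'' time to the ``reproduction-capable'' one. Let $\hat M = \sum_{j=1}^{n^\beta} \theta_j\n$ and let $D_{\hat M}$ denote the index of its $\hat M$-th nonzero entry. Since $R_\ell\n = \hat R_{\hat M}$ for $\ell \in [D_{\hat M}, n^\beta - 1]$,
\[
 T_{n^\beta}^{\mathrm{Y}} \ = \ \hat T_{\hat M} + \hat R_{\hat M}^{-1} \sum_{\ell=D_{\hat M}}^{n^\beta - 1} \xi_\ell.
\]
The trailing block length $n^\beta - D_{\hat M}$ is stochastically dominated by a $\Geo(p_n)$ variable (hence $O_p(p_n^{-1})$), while $\hat R_{\hat M} = \lambda p_n n^\beta (1 + o_p(1))$ by the law of large numbers for the transmitter contact rates. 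Because $p_n n^\beta \to \infty$ (since $\beta > 1/2 + \alpha$ and $p_n \gg n^{-2\alpha}$), multiplying by $\lambda p_n$ gives that the trailing term contributes $O_p((p_n n^\beta)^{-1}) = o_p(1)$; the same binomial LLN yields $\hat M / (p_n n^\beta) \pto 1$, whence $\log \hat M - \log(p_n n^\beta) \pto 0$.

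By Lemma~\ref{the:Yule2}, $\lambda p_n \hat T_m \eqst \sum_{\ell=1}^m \xi_\ell / J_\ell$, and Proposition~\ref{the:Yule} says the latter minus $\log m$ converges almost surely to $\alkusm$. A Skorokhod-type coupling together with a subsequence argument then allows substitution of the random index $\hat M$, giving
\[
 \lambda p_n \hat T_{\hat M} - \log(p_n n^\beta) \ = \ \bigl(\lambda p_n \hat T_{\hat M} - \log \hat M\bigr) + \bigl(\log \hat M - \log(p_n n^\beta)\bigr) \ \dto \ \alkusm,
\]
which combined with the previous error estimates yields the claim. I expect the main technical hurdle to be this random-index substitution: one needs tightness of the Yule fluctuations $\sum_{\ell=1}^m \xi_\ell/J_\ell - \log m$ (provided by Proposition~\ref{the:Yule}) together with sufficiently strong concentration of $\hat M$ around $p_n n^\beta$ to absorb the log-ratio, after which the decomposition above is routine.
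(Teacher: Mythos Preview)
Your approach is correct and close in spirit to the paper's, but the bookkeeping at the key step is organized differently. The paper replaces the upper limit $n^\beta$ by $D_{r_n}^{(n)}$ with the \emph{deterministic} target $r_n=\lfloor p_n n^\beta\rfloor$, bounds the resulting difference using Lemma~\ref{lem:taupnn} (concentration of $D_{r_n}^{(n)}$ near $n^\beta$) and Lemma~\ref{lem:An}, and then invokes Lemma~\ref{the:Yule2} with the deterministic index $m=r_n$ so that Proposition~\ref{the:Yule} applies directly to the fixed sequence $\sum_{k=1}^{r_n}\xi_k/J_k$. You instead pass to $\hat T_{\hat M}$ with the \emph{random} count $\hat M=\sum_{j\le n^\beta}\theta_j^{(n)}$, which makes the trailing remainder trivially small but forces a random-index substitution in the Yule limit. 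Both routes work and are of comparable length; the paper's choice buys a cleaner application of Lemma~\ref{the:Yule2}, while yours avoids the separate negative-binomial concentration lemma.

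One point deserves sharpening. Calling the random-index step a ``Skorokhod-type coupling together with a subsequence argument'' is not the right description, and taken literally it does not resolve the issue that $\hat M$ is correlated with $(\hat T_m)_m$ through the $\theta$-sequence. What actually closes the gap is an Anscombe-type argument: the proof of Lemma~\ref{the:Yule2} gives the \emph{process-level} identity $(\lambda p_n\hat T_m)_{m\ge 0}\eqst(\sum_{\ell=1}^m\xi_\ell/J_\ell)_{m\ge 0}$, and Proposition~\ref{the:Yule} gives almost-sure convergence of the right side. Hence for every $\epsilon>0$ one can choose $M$ with $\pr(\sup_{m\ge M}|\sum_{\ell=1}^m\xi_\ell/J_\ell-\log m - V|>\epsilon)$ small, uniformly in $n$; combined with $\hat M_n\pto\infty$ this yields $\lambda p_n\hat T_{\hat M_n}-\log\hat M_n\dto V$ regardless of the dependence between $\hat M_n$ and the process. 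With that clarification your proof goes through.
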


\begin{proof}
For convenience, we will assume that $Z_1,Z_2,\dots$ and $\theta^{(n)}_1,\theta^{(n)}_2,\dots$ are \emph{infinite} independent sequences of i.i.d.\ random numbers distributed according to $F$ and $\Ber(p_n)$, respectively, although only the first $n$ elements are required to construct the model with $n$ nonroot nodes. From now on also the spreading intensity $R_k\n$ will be defined by \eqref{eq:SpreadingIntensity} for all integers $k \ge 0$. As in Section~\ref{subsec:Yule}, we define $J_1 = \lambda^{-1}z_0$ and
\[
 J_k = \lambda^{-1} \left( z_0 + \sum_{i=1}^{k-1} \uint_i \right), \quad k \ge 2.
\]
Also, we define $D_0^{(n)} = 0$ and 
\[
 D_m^{(n)} = \inf\left\{k \ge 1 : \sum_{i=1}^{k} \theta_i\n = m\right\}, \quad m \ge 1.
\]
Notice that $D_m^{(n)}$ counts how many nonroot nodes need to be informed in order to get $m$ nonroot transmitters informed.  Because exactly one node is informed on every successful message transmission, $D_m^{(n)}$ is also the number of successful message transmissions required to inform $m$ nonroot transmitters.

We prove the result by rigorously verifying each step in the chain of approximations
\begin{align*}
 \Rsum(0,n^\beta)
 \approx \lambda p_n \sum_{k=0}^{n^\beta-1} \frac{\xi_k}{R_k\n}
 \approx \lambda p_n \!\!\!\!\! \sum_{k=0}^{D^{(n)}_{r_n}-1} \!\!\!\! \frac{\xi_k}{R_k\n}
 \approx \sum_{k=1}^{r_n} \frac{\xi_k}{J_k}
 \approx \log (p_n n^\beta) + V,
\end{align*}
where $r_n = \floor{p_n n^\beta}$ approximates the mean number of transmitters among the first $n^\beta$ informed nodes.

(i) Assume that $n$ is large enough so that $n - n^\beta + 1 \ge \frac{1}{2}n$. Then
\[
 1
 \le \frac{n}{n-k}
 \le \frac{n}{n - n^\beta + 1}
 =   1 + \frac{n^\beta - 1}{n - n^\beta + 1}
 \le 1 + \frac{2}{n^{1-\beta}}
\]
for all $0 \le k \le n^\beta-1$. Therefore 
\begin{equation}
 \label{eq:Step1}
 \begin{aligned}
   &(\lambda p_n)^{-1}\Rsum(0,n^\beta) 
  = \sum_{k=0}^{n^\beta-1} \frac{n}{n-k} \frac{\uexp_k}{R_k\n} \\
 = & \left( 1 + O(n^{-(1-\beta)}) \right) \sum_{k=0}^{n^\beta-1}\frac{\uexp_k}{R_k\n},
 \end{aligned}
\end{equation}
where first equation is the definition of $\Rsum(0,n^\beta)$.

(ii) First write the estimate
\begin{equation}\label{eq:summanvaihto1}
\begin{aligned}
 &\left| \sum_{k=0}^{D_{r_n}^{(n)} - 1} \frac{\uexp_k}{R_k\n} - \sum_{k=0}^{n^{\beta}-1} \frac{\uexp_k}{R_k\n} \right| \\
 =& \sum_{k=n^\beta \wedge D_{r_n}^{(n)}}^{n^\beta \vee D_{r_n}^{(n)} - 1} \frac{\uexp_k}{R_k\n} 
 \le \frac{1}{R_{n^\beta \wedge D_{r_n}^{(n)}}\n}  \sum_{k=n^\beta \wedge D_{r_n}^{(n)}}^{n^\beta \vee D_{r_n}^{(n)} - 1} \uexp_k = 
\frac{|D_{r_n}^{(n)}-n^\beta|}{R_{n^\beta \wedge D_{r_n}^{(n)}}\n} O_P(1).
\end{aligned}
\end{equation}
Now choose an $\epsilon > 0$ such that $\beta - \epsilon > \frac{1}{2} + \alpha$. Then
$p_n n^{\beta - \epsilon} \to \infty$, so Lemma~\ref{lem:taupnn} below shows that with high probability
\begin{equation*}
\frac{|D_{r_n}^{(n)} - n^\beta|}{R_{n^\beta \wedge D_{r_n}^{(n)}}\n}
\le \frac{n^{\beta-\epsilon/2}}{R_{n^\beta/2}\n},
\end{equation*}
and Lemma~\ref{lem:An} further implies that $R_{n^\beta/2}\n \ge \frac{1}{3} \lambda p_n n^{\beta}$
with high probability. By combining these estimates with \eqref{eq:summanvaihto1} we see that
\begin{equation}
 \label{eq:Step2}
 \sum_{k=0}^{D_{r_n}^{(n)} - 1} \frac{\uexp_k}{R_k\n}
 = \sum_{k=0}^{n^{\beta}-1} \frac{\uexp_k}{R_k\n} + o_P(1/p_n).
\end{equation} 

(iii) Because the definitions of $D_m\n$ and $R_k\n$ coincide with the thinned Yule processes analyzed in Section~\ref{sec:ThinnedYule}, Lemma~\ref{the:Yule2} implies that for all $m \ge 1$,
\begin{equation}
\label{eq:Step3}
\lambda p_n \sum_{k=0}^{D_m\n-1} \frac{\uexp_k}{R_k\n} \eqst \sum_{k=1}^{m} \frac{\uexp_k}{J_{k}}.
\end{equation} 

(iv) The analysis of the Yule process in Proposition~\ref{the:Yule} shows that with probability one, as $m \to \infty$,
\begin{equation}
 \label{eq:Step4}
  \sum_{k=1}^{m} \frac{\uexp_k}{J_k} \ - \ \log m
  \ \to \ V.
\end{equation}

We can now finish the proof by combining the above steps. Formulas
\eqref{eq:Step3}--\eqref{eq:Step4} imply that
\[
 \lambda p_n \sum_{k=0}^{D_{r_n}-1} \frac{\uexp_k}{R_k\n} - \log (p_n n^\beta)
 \ \dto \ V,
\]
and by \eqref{eq:Step2} the same limit also holds if we replace $D_{r_n}$ by $n^\beta$ above.
The claim now follows by \eqref{eq:Step1} because $0 \le n^{-(1-\beta)} \log (p_n n^\beta) \to 0$.
\end{proof}

\begin{lemma}
\label{lem:taupnn}
Let $A_n = \sum_{k=1}^{r_n} X_k\n$, where $r_n = \floor{p_n n^\beta}$ with $\beta \in (0,1)$, and the summands are
independent geometrically distributed random numbers on $\{1,2,\dots\}$ with success probability
$p_n$. Assume that $p_n n^\beta \gg n^\epsilon$ for some $\epsilon > 0$. Then
\[
 | A_n - n^\beta | \le n^{\beta-\epsilon/2}
 \qquad \text{and} \qquad
 n^\beta \wedge A_n \ge \frac{n^\beta}{2}
\]
with high probability.
\end{lemma}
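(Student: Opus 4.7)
The plan is to apply a second-moment (Chebyshev) estimate to the sum of independent geometric variables and then deduce the second bound as an easy consequence of the first.

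First I would compute the first two moments. Since each $X_k\n$ is geometric on $\{1,2,\dots\}$ with success probability $p_n$, we have $\E X_k\n = 1/p_n$ and $\Var(X_k\n) = (1-p_n)/p_n^2 \le 1/p_n^2$. Therefore, using $r_n = \floor{p_n n^\beta}$,
\[
 \E A_n = \frac{r_n}{p_n} \in [n^\beta - p_n^{-1},\, n^\beta],
 \qquad
 \Var(A_n) \le \frac{r_n}{p_n^2} \le \frac{n^\beta}{p_n}.
\]

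Next I would bound $|\E A_n - n^\beta|$ and the fluctuation $|A_n - \E A_n|$. From $p_n n^\beta \gg n^\epsilon$ one gets $p_n^{-1} \ll n^{\beta-\epsilon}$, so the deterministic error $|\E A_n - n^\beta| \le p_n^{-1}$ is eventually smaller than $\tfrac{1}{2}n^{\beta-\epsilon/2}$. For the random part, Chebyshev gives
\[
 \pr\bigl( |A_n - \E A_n| > \tfrac{1}{2}n^{\beta-\epsilon/2} \bigr)
 \ \le \ \frac{4\,\Var(A_n)}{n^{2\beta - \epsilon}}
 \ \le \ \frac{4 n^{\epsilon - \beta}}{p_n}
 \ = \ \frac{4 n^\epsilon}{p_n n^\beta} \ \to \ 0,
\]
again by the assumption $p_n n^\beta \gg n^\epsilon$. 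Combining the two estimates via the triangle inequality yields $|A_n - n^\beta| \le n^{\beta - \epsilon/2}$ with high probability, which is the first claim.

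For the second claim, note that $n^\beta \wedge A_n \ge n^\beta/2$ is equivalent to $A_n \ge n^\beta/2$, since $n^\beta \ge n^\beta/2$ trivially. But on the event of the first claim we have $A_n \ge n^\beta - n^{\beta - \epsilon/2} = n^\beta(1 - n^{-\epsilon/2})$, which is at least $n^\beta/2$ for all sufficiently large $n$. I do not expect any serious obstacle here; the only point requiring care is to make sure both the deterministic bias $|\E A_n - n^\beta|$ and the stochastic fluctuation are absorbed into the target deviation $n^{\beta-\epsilon/2}$, which the assumption $p_n n^\beta \gg n^\epsilon$ delivers with room to spare.
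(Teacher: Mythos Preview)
Your proof is correct and follows essentially the same approach as the paper: compute the mean and variance of $A_n$, absorb the deterministic bias $|\E A_n - n^\beta|\le 1/p_n$ into the target deviation, apply Chebyshev's inequality to control the fluctuation, and then deduce the second claim from the first via $A_n \ge n^\beta(1-n^{-\epsilon/2})$. The only cosmetic difference is that you split the deviation $n^{\beta-\epsilon/2}$ into two halves up front, whereas the paper subtracts $1/p_n$ and then bounds the remainder below by $\tfrac12 n^{\beta-\epsilon/2}$.
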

\begin{proof}
Notice that $\E A_n = r_n/p_n$ and $\Var(A_n) = r_n \frac{1-p_n}{p_n^2} \le n^\beta/p_n$. Note that $|n^\beta - r_n/p_n| \le 1/p_n$, so that
\[
 |A_n - n^\beta|
 \ \le \ |A_n - \E A_n| + 1/p_n.
\]
Assume that $n$ is so large that $p_n n^\beta \ge 2 n^{\epsilon/2}$. Then $n^{\beta-\epsilon/2} -
1/p_n \ge \frac{1}{2} n^{\beta - \epsilon/2}$, and Chebyshev's inequality shows that
\[
 \pr \left(  | A_n - n^\beta | > n^{\beta-\epsilon/2} \right)
 \le \pr \left( |A_n - \E A_n | > n^{\beta-\epsilon/2} - \frac{1}{p_n} \right )
 \le \frac{4 n^\beta/p_n}{n^{2\beta - \epsilon}}.
\]
This proves the first claim because the right side above vanishes as $n$ grows to infinity. The second claim follows from the first by choosing $n$ large enough so that $1-n^{-\epsilon/2} \ge \frac{1}{2}$.
\end{proof}

\begin{lemma}[Middle phase]
\label{lem:keskiosa}
Let $\beta \in (0,1).$ Then,
\begin{align*}
 \Rsumm(n^\beta,n/2)
 \ &= \ (1-\beta) \log n + o_P(1) \\
 \text{and} \\
 \Rsumm(n/2,n-n^\beta)
 \ &= \ (1-\beta) \log n + o_P(1).
\end{align*}
\end{lemma}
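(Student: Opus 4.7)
The plan is a direct first- and second-moment computation. Because the $\xi_k$ are i.i.d.\ unit exponentials with mean $1$ and variance $1$, and because $\Rsumm(l,m)=\sum_{k=l}^{m-1}\frac{n}{n-k}\frac{\xi_k}{k}$ is a sum of independent random variables, we have
\[
 \E\, \Rsumm(l,m) = \sum_{k=l}^{m-1} \frac{n}{(n-k)k}
 \quad\text{and}\quad
 \Var\, \Rsumm(l,m) = \sum_{k=l}^{m-1} \left(\frac{n}{(n-k)k}\right)^2.
\]
So it suffices to show that the mean equals $(1-\beta)\log n + o(1)$ in each of the two ranges, and that the variance tends to $0$; Chebyshev's inequality will then give the $o_P(1)$ conclusion.

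For the means I would use the partial-fraction identity $\frac{n}{k(n-k)} = \frac{1}{k}+\frac{1}{n-k}$, so that each mean becomes a difference of harmonic numbers. Writing $H_N=\log N+\gamma+O(1/N)$, the first mean splits as
\[
 \bigl(H_{n/2-1}-H_{n^\beta-1}\bigr)+\bigl(H_{n-n^\beta}-H_{n/2}\bigr)
 = (1-\beta)\log n - \log 2 + \log 2 + o(1),
\]
and the second mean splits analogously to $(1-\beta)\log n + o(1)$, using $\log(n-n^\beta)=\log n+o(1)$. The only nuisance is tracking the two $\pm\log 2$ terms, which cancel, and verifying that the $O(1/k)$ corrections in the harmonic asymptotics are negligible at the endpoints $k=n^\beta$ (where they are $O(n^{-\beta})$).

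For the variances I would handle the two ranges separately using the symmetry break at $k=n/2$. On $[n^\beta, n/2)$ we have $n-k\ge n/2$, hence $n/(n-k)\le 2$, so the variance is bounded by $4\sum_{k\ge n^\beta}1/k^2 = O(n^{-\beta})$. On $[n/2, n-n^\beta)$ we instead have $k\ge n/2$ and substitute $j=n-k$, bounding the variance by $4\sum_{j\ge n^\beta}1/j^2 = O(n^{-\beta})$. Both vanish as $n\to\infty$.

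I do not expect a serious obstacle here: the proof is essentially a careful harmonic-sum asymptotic together with a crude tail bound on $\sum 1/k^2$. The one place to be slightly careful is the bookkeeping of constants $\log 2$ that appear from the endpoint $n/2$; since both sums straddle this endpoint on opposite sides, the $-\log 2$ coming from one factor cancels the $+\log 2$ coming from the other, producing the clean $(1-\beta)\log n$ leading term. Chebyshev's inequality applied with the vanishing variance bound then converts the mean estimates into the claimed $o_P(1)$ statement.
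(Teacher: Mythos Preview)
Your proposal is correct and follows essentially the same approach as the paper: compute the mean via the partial-fraction identity $\frac{n}{k(n-k)}=\frac{1}{k}+\frac{1}{n-k}$ and harmonic-number asymptotics, bound the variance by $4\sum_{k\ge n^\beta}1/k^2=o(1)$ using $n/(n-k)\le 2$ (respectively $n/k\le 2$) on the two ranges, and conclude via Chebyshev. The only cosmetic difference is that the paper merges the two harmonic sums into a single one over $[n^\beta,\,n-n^\beta]$ so that the $\pm\log 2$ cancellation you track explicitly never appears.
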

\begin{proof}
First we calculate that
\[
 \Var \Rsumm(n^\beta,n/2) 
 \le \sum_{k=n^\beta}^{\frac{n}{2}-1} \frac{1}{k^2} \left(\frac{n}{n-k}\right)^2
 \le 4 \sum_{k=n^\beta}^\infty \frac{1}{k^2}
 = o(1),
\]
and 
\begin{align*}
 &\E \Rsumm(n^\beta,n/2) =  \sum_{k=n^\beta}^{\frac{n}{2}-1} \left(\frac{1}{k} + \frac{1}{n-k} \right)
 \\ &= \sum_{k=n^\beta}^{\frac{n}{2}-1} \frac{1}{k} + \sum_{k=\frac{n}{2}+1}^{n-n^\beta}  \frac{1}{k} =  \ (1-\beta)\log n + o(1).
\end{align*}
To justify the last equality above note that if $b_n > a_n \to \infty$ as $n \to \infty$ then $\sum_{k=a_n}^{b_n} \frac{1}{k} = \log b_n - \log a_n + o(1)$. 

Then first claim in the lemma can now be then obtained by Chebyshev's inequality. The second claim is proved similarly.
\end{proof}

\begin{lemma}[Final phase]
\label{lem:Final}
Let $\beta \in (0,1)$. Then
\[
 \Rsumm(n-n^\beta,n) - \log(n^\beta)
 \ \dto \ \gumbelsm,
\]
where the limit has the standard Gumbel distribution $\pr(\gumbelsm \le t) = e^{-e^{-t}}$.
\end{lemma}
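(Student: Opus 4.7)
The plan is to exploit the fact that sums of independent exponentials with reciprocal weights are intimately related to extreme value statistics via Rényi's representation of exponential order statistics.

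First I would make the substitution $j = n - k$ to rewrite
\[
 \Rsumm(n-n^\beta,n)
 \ = \ \sum_{k=n-n^\beta}^{n-1} \frac{n}{n-k} \frac{\xi_k}{k}
 \ = \ \sum_{j=1}^{n^\beta} \frac{\xi_{n-j}}{j} \cdot \frac{n}{n-j}.
\]
Because $\xi_{n-1}, \xi_{n-2}, \dots, \xi_{n-n^\beta}$ is a vector of $\floor{n^\beta}$ independent unit exponentials, I can relabel them and treat the sum as $\sum_{j=1}^{m_n} \frac{\xi_j}{j} \cdot \frac{n}{n-j}$ with $m_n = \floor{n^\beta}$.

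Next I would peel off the correction factor by writing $\frac{n}{n-j} = 1 + \frac{j}{n-j}$, so that
\[
 \Rsumm(n-n^\beta,n)
 \ = \ \sum_{j=1}^{m_n} \frac{\xi_j}{j} \ + \ \sum_{j=1}^{m_n} \frac{\xi_j}{n-j}.
\]
The correction term has expectation at most $\frac{n^\beta}{n-n^\beta} = o(1)$ and variance $\sum_{j=1}^{m_n}(n-j)^{-2} = O(n^\beta/n^2) = o(1)$, so it vanishes in probability by Chebyshev.

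The main step is to identify the distribution of the principal sum. By Rényi's representation of exponential order statistics, if $E_1,\dots,E_m$ are independent unit exponentials with order statistics $E_{(1)} \le \dots \le E_{(m)}$, then the spacings $E_{(k)}-E_{(k-1)}$ are independent exponentials with rates $m-k+1$, which after reindexing yields
\[
 E_{(m)} \ \eqst \ \sum_{j=1}^m \frac{\xi_j}{j}.
\]
Hence $\sum_{j=1}^{m_n} \xi_j/j$ has the same distribution as the maximum of $m_n$ independent unit exponentials. The classical extreme value limit $\pr(E_{(m_n)} \le t + \log m_n) = (1 - e^{-t}/m_n)^{m_n} \to \exp(-e^{-t})$ combined with $\log m_n = \beta \log n + o(1)$ gives $\sum_{j=1}^{m_n} \xi_j/j - \log(n^\beta) \dto \gumbelsm$, and Slutsky's lemma combines this with the $o_P(1)$ correction to finish the proof.

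The only mildly delicate point is the application of Rényi's representation; everything else reduces to bookkeeping. No obstacle looks serious, since the truncation errors are all of order $n^{\beta-1}$ or smaller and $\beta < 1$ is built into the hypothesis.
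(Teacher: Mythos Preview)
Your proof is correct and follows essentially the same route as the paper: the substitution $j=n-k$, R\'enyi's representation $\sum_{j=1}^m \xi_j/j \eqst \max(\xi_1,\dots,\xi_m)$, and the classical Gumbel limit for the maximum. The only cosmetic difference is that the paper absorbs the factor $n/(n-j)$ into a multiplicative ratio $M_n \in [1,\, n/(n-b_n)]$ rather than splitting off an additive $o_P(1)$ correction as you do.
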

\begin{proof}
Denote $b_n = \floor{n^\beta}$. Observe first that
\[
 \Rsumm(n-b_n,n)
 \ = \sum_{k= n  - b_n}^{n-1} \frac{n}{n-k} \frac{\uexp_k}{k}
 \ = \ \sum_{i=1}^{b_n} \frac{n}{n-i} \frac{\uexp_{n-i}}{i}
 = M_n \sum_{i=1}^{b_n}\frac{\uexp_{n-i}}{i},
\]
where $1 \le M_n \le \frac{n}{n-b_n}$ almost surely. Next, Rényi's representation formula of exponential order statistics (e.g.\ \cite[Thm~4.6.1]{Arnold_Balakrishan_Nagaraja_2008}) implies that $\sum_{i=1}^m \frac{\xi_i}{i} \eqst \max\{\xi_1,\dots,\xi_m\}$, so that
\[
 \pr\left( \sum_{i=1}^m \frac{\xi_i}{i} - \log m \le t \right)
 = \pr( \xi_1 \le t + \log m)^m
 = \left( 1 - \frac{e^{-t}}{m} \right)^m
 \to e^{-e^{-t}},
\]
as $m \to \infty$. This implies the claim because $\sum_{i=1}^{b_n}\frac{\uexp_{n-i}}{i} \eqst \sum_{i=1}^{b_n} \frac{\xi_i}{i}$, and because $M_n \to 1$ and $(M_n-1)\log b_n \to 0$ almost surely.
\end{proof}

\subsection{Fraction of informed nodes}

In this section we prove Theorem~\ref{the:Logistic} by first proving two auxiliary lemmas. Below
we use the convention that $S_n(t) = 0$ for $t < 0$.

\begin{lemma}
\label{lem:F}
For any $t \in \R$,
\[
 \lambda p_n\left(T_{\logis(t)n}\n - \Thalf\n\right) \pto t.
\]
\end{lemma}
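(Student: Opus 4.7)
The plan is to rewrite $\lambda p_n(T_{\logis(t) n}\n - \Thalf\n)$ as a signed increment of the normalized sum $\Sigma\n$, replace $\Sigma\n$ by its deterministic-denominator counterpart $\overline\Sigma\n$ using Lemma~\ref{lem:An2}, and then pin down the limit via an elementary first-and-second-moment calculation combined with Chebyshev's inequality.

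Set $m_0 = \floor{n/2}$ and $m_t = \floor{\logis(t) n}$. The case $t=0$ is immediate. Assume first $t>0$, so that $m_t > m_0$ once $n$ is large; then by \eqref{eq:Tk},
\[
 \lambda p_n(T_{m_t}\n - T_{m_0}\n) \ = \ \Sigma\n(m_0, m_t),
\]
and for $t<0$ the same identity holds with a minus sign after swapping the two endpoints. For any fixed $t$, both indices are of order $n$, so they lie in $[n^\beta, n]$ for every $\beta \in (\tfrac12,1)$ and $n$ large. Lemma~\ref{lem:An2} therefore gives $\Sigma\n(m_0,m_t) = \overline\Sigma\n(m_0,m_t)\bigl(1 + O_P(n^{-\delta})\bigr)$ for a suitable $\delta>0$, and it suffices to show $\overline\Sigma\n(m_0,m_t) \pto t$, after which Slutsky finishes the job.

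For the moment analysis of $\overline\Sigma\n(m_0, m_t) = \sum_{k=m_0}^{m_t - 1} \frac{n}{n-k}\frac{\xi_k}{k}$, I use the partial fraction identity $\frac{n}{(n-k)k} = \frac{1}{k}+\frac{1}{n-k}$ and the standard harmonic asymptotic $\sum_{k=a_n}^{b_n-1} \tfrac1k = \log(b_n/a_n) + o(1)$ whenever $a_n \to \infty$. Applied to the sums over $k\in[m_0,m_t-1]$ and the reindexed range $[n-m_t+1, n-m_0]$, this yields
\[
 \E\,\overline\Sigma\n(m_0, m_t) \ \to \ \log\frac{\logis(t)}{1-\logis(t)} \ + \ \log\frac{1/2}{1/2} \ = \ t,
\]
after using $\logis(t)/(1-\logis(t))=e^t$ together with the cancellation between $\log(2\logis(t))$ and $\log(1/(2(1-\logis(t))))$. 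The variance is bounded by $C(t)\sum_{k\ge n/2} k^{-2} = O(1/n)$, since $n/(n-k)$ stays bounded on $k/n \in [\tfrac12,\logis(|t|)]$. Chebyshev then gives $\overline\Sigma\n(m_0,m_t) \pto t$. The case $t<0$ is strictly symmetric: the analogous computation of $\overline\Sigma\n(m_t,m_0)$ converges to $-t$, which cancels the extra minus sign.

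There is no real obstacle; the only calculation worth singling out is the algebraic identity $\log(\logis(t)/\logis(-t))=t$ for the logistic law, which is precisely the reason the logistic CDF is the correct time--change profile in Theorem~\ref{the:Logistic}. Everything else is routine bookkeeping with harmonic sums and an appeal to the already established Lemma~\ref{lem:An2}.
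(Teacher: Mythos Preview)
Your proof is correct and follows essentially the same route as the paper's: rewrite the difference as $\pm\Sigma\n$, invoke Lemma~\ref{lem:An2} to pass to $\overline\Sigma\n$, and finish with a mean--variance computation for $\overline\Sigma\n$ using the partial fraction $\tfrac{n}{k(n-k)}=\tfrac1k+\tfrac1{n-k}$ and Chebyshev. The only cosmetic differences are that the paper treats $t<0$ explicitly and leaves $t>0$ to the reader (you do the reverse), and the paper collapses the two harmonic pieces into a single sum $\sum_{k=S(t)n}^{(1-S(t))n}\tfrac1k$ rather than computing them separately as you do.
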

\begin{proof}
For $t=0$ the claim is obvious because $S(0) = \frac{1}{2}.$
Assume then that $t < 0$, so that $0 < S(t) < 1/2$. Using the representation \eqref{eq:Tk} we can write
\[
 -\lambda p_n\left(T_{\logis(t)n}\n - \Thalf\n\right) \ = \ \Rsum\left(\logis(t)n,n/2\right).
\]
Lemma \ref{lem:An2} implies that
\[
 \frac{\Rsum\left(\logis(t)n,n/2\right)}{\Rsumm\left(\logis(t)n,n/2\right)} \pto 1. 
\]
Therefore, by Slutsky's lemma, it suffices to show that
\[
 \Rsumm\left(\logis(t)n,n/2\right) \pto -t.
\]
This is true because
\[
 \begin{aligned}
 & \EE \left [ \Rsumm\left(\logis(t)n,n/2\right) \right ] \\
 =& \sum_{k=\logis(t)n}^{\frac{n}{2}-1} \frac{n }{k(n-k)}
 =  \sum_{k=\logis(t)n}^{\frac{n}{2}-1} \left( \frac{1}{k} + \frac{1}{n-k} \right)
 =  \sum_{k=\logis(t)n}^{(1-\logis(t))n} \frac{1}{k} + o(1)\\
 =& \log (1-\logis(t)) - \log(\logis(t))+o(1)
 = -t + o(1)
 \end{aligned}
\]
and
\[
 \Var \left [ \Rsumm\left(\logis(t)n,n/2\right) \right ]
 = \sum_{k=\logis(t)n}^{\frac{n}{2}-1} \frac{1}{k^2}\left(\frac{n}{n-k}\right)^2
 \le 4 \sum_{k=\logis(t)n}^\infty \frac{1}{k^2}
 = o(1).
\]
For $t > 0$ the proof is similar and hence omitted. 
\end{proof}

\begin{lemma}
\label{lem:H}
For any $t \in \RR$,
\[
 \prop_n ( \Thalf\n + t/(\lambda p_n) ) \pto \logis(t).
\]
\end{lemma}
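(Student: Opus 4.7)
The plan is to deduce Lemma \ref{lem:H} from Lemma \ref{lem:F} by inverting the direction of the limit: Lemma \ref{lem:F} locates the \emph{time} at which a prescribed fraction of nodes has become informed, whereas Lemma \ref{lem:H} locates the \emph{fraction} informed at a prescribed time. The bridge is the deterministic identity
\[
 \prop_n(s) \ge k/n \iff T_k\n \le s, \qquad k \in \{0,1,\dots,n\},
\]
which holds because $\prop_n$ is a nondecreasing right-continuous step function whose jumps, of height $1/n$, occur exactly at the transmission instants $T_k\n$.

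First I would fix $t \in \R$ and choose $\epsilon > 0$ small enough that $\logis(t) \pm \epsilon \in (0,1)$. Since $\logis$ is a continuous strictly increasing bijection $\R \to (0,1)$, there exist unique reals $t^-, t^+$ with $\logis(t^\pm) = \logis(t) \pm \epsilon$, and necessarily $t^- < t < t^+$. Writing $s_n = \Thalf\n + t/(\lambda p_n)$ and using the identity together with the integrality of $n\prop_n(s_n)$, I would rewrite
\[
 \pr\bigl( \prop_n(s_n) \ge \logis(t^+) \bigr)
 \ = \ \pr\bigl( \lambda p_n (T_{\ceil{n \logis(t^+)}}\n - \Thalf\n) \le t \bigr).
\]
Lemma \ref{lem:F} provides $\lambda p_n (T_{\floor{n \logis(t^+)}}\n - \Thalf\n) \pto t^+$, and (as explained in the next paragraph) the same limit holds with the ceiling. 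Since $t^+ > t$, this probability vanishes. A symmetric argument with $t^-$ and the complementary identity $\prop_n(s) < k/n \iff T_k\n > s$ will yield $\pr(\prop_n(s_n) \le \logis(t^-)) \to 0$. Combining the two bounds gives $\prop_n(s_n) \pto \logis(t)$, which is the desired conclusion.

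The one point that needs care is that Lemma \ref{lem:F} is stated for the floor-rounded index $\floor{n \logis(t^+)}$ while the inversion identity produces the ceiling $\ceil{n \logis(t^+)}$. These differ by at most one, and the corresponding interarrival contribution
\[
 \lambda p_n\bigl(T_{k+1}\n - T_k\n\bigr) \ = \ \frac{\lambda p_n n\, \xi_k}{(n-k)\, R_k\n}
\]
is $O_P(1/n) = o_P(1)$ for any $k$ with $k/n$ bounded away from $0$ and $1$, by Lemma \ref{lem:An} (which guarantees $R_k\n \asymp \lambda p_n k$ in this regime). Since $\logis(t^+) \in (0,1)$, the index $\ceil{n \logis(t^+)}$ lies in exactly such a range, so swapping floor for ceiling does not affect the limit. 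This routine off-by-one check is the only potential obstacle; everything else is a direct inversion of Lemma \ref{lem:F}.
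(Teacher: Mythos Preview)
Your argument is correct and follows essentially the same route as the paper: invert Lemma~\ref{lem:F} via the equivalence $\prop_n(s)\ge k/n \iff T_k\n \le s$, sandwich $\prop_n(s_n)$ between $\logis(t^-)$ and $\logis(t^+)$, and apply Lemma~\ref{lem:F} at $t^\pm$. The paper writes the equivalence directly with $\floor{nr}$ rather than $\ceil{nr}$, so your careful floor/ceiling discussion, while correct, is not strictly needed if you adopt that formulation.
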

\begin{proof}
Note first that for any $t \in \R$ and $r \in (0,1)$,
\[
 \prop_n(\Thalf\n+t) \ge r
 \ \Leftrightarrow \
 n \prop_n(\Thalf\n+t) \ge \lfloor n r \rfloor
 \ \Leftrightarrow \
 T_{\floor{nr}}\n - \Thalf\n \le t.
\]
Now fix $t \in \R$ and assume that $\epsilon > 0$ is so small that $S(t)-\epsilon$ and $S(t) +
\epsilon$ are in $(0,1)$. Let $t_1 < t < t_2$ be such that $S(t_1) = S(t)-\epsilon$ and $S(t_2) =
S(t) + \epsilon$. Then the above equivalence shows that
\begin{equation*}
\begin{aligned}
 &\PP\left(|\prop_n(\Thalf\n+t/(\lambda p_n))-\logis(t)| > \epsilon \right) \\
 =& \PP\left( \prop_n(\Thalf\n+t/(\lambda p_n)) < S(t_1) \right)
 + \PP\left( \prop_n(\Thalf\n+t/(\lambda p_n)) \ge S(t_2) \right) \\
 \le & \PP\left(\lambda p_n(T_{\floor{S(t_1)n}}\n - \Thalf\n) > t \right)
 + \PP\left(\lambda p_n(T_{\floor{S(t_2)n}}\n - \Thalf\n) \le t \right).
\end{aligned}
\end{equation*}
By Lemma \ref{lem:F}, $\lambda p_n(T_{\floor{S(t_i)n}}\n - \Thalf\n) \pto t_i$ for $i=1,2$. This
implies that both terms on the last line above vanish as $n$ grows.
\end{proof}

\begin{proof}[Proof of Theorem \ref{the:Logistic}]
Let $\epsilon > 0$, and fix a large enough $M>0$ such that $S(M) \ge 1-\epsilon/2$ and $S(-M) \le \epsilon/2$.
Then partition $[-M,M]$ into $N$ subintervals using equidistant points $-M=t_0 < t_1 < t_2 < \cdots < t_N = M$, where $N$ is so large that the subinterval length is bounded by $2 M/N < \epsilon/2$. Because $S$ is increasing and Lipschitz with $0 \le S'(t) \le 1$ for all $t$, it is not hard to verify that for any other increasing function $f: \R \to [0,1]$,
\[
 \sup_{t \in \R} | S(t) - f(t) | \ \le \ \epsilon/2 + \max_{i=0,\dots,N} |S(t_i) - f(t_i)|.
\]
By applying the above estimate to the random function $t \mapsto S_n(\Thalf^{(n)}+t/(\lambda p_n))$, we see with the help of Lemma~\ref{lem:H} that
\[
\begin{aligned}
 &\PP\left(\sup_{t\in \RR}|S_n(\Thalf\n+t/(\lambda p_n))-S(t)| > \epsilon\right) \\
 \le &\sum_{i=0}^N \PP\left(|S_n(\Thalf\n+t_i/(\lambda  p_n))-S(t_i)| > \frac{\epsilon}{2}\right) \to 0.
\end{aligned}
\]
\end{proof}

\subsection{First passage times}
\begin{proof}[Proof of Theorem~\ref{the:FirstPassageTimes}]
Denote $$\alkusm_n = \lambda p_n \Thalf\n - \log (p_n n).$$

For notational purposes, let us assume that $K=\{1,\dots,k\}.$
Now write
$$\lambda p_n\tau_i - \log (p_n n) = \alkusm_n + \lambda p_n\left(\tau_i-  \Thalf\n\right).$$
In the following we will prove that
$$\left(\lambda p_n(\tau_1-  \Thalf\n),\dots,\lambda p_n(\tau_k-  \Thalf\n),\alkusm_n\right) \dto (L_1,\dots,L_k,\alkusm)$$ which implies the claim of Theorem~\ref{the:FirstPassageTimes}.

Pick $t \in \RR$ and $a=(a_1,\dots,a_k) \in \RR^k.$ By relabelling the elements of $K$ we can
assume that $a_1 \le a_2 \le \cdots \le a_k$. Define the event
\begin{align*}
 A_n
 :=& \bigcap_{i=1}^k \{\lambda p_n(\tau_i - \Thalf\n) \le a_i \}\\
  =& \bigcap_{i=1}^k \{\text{Node } i \text{ is informed at time } \Thalf\n + a_i/(\lambda p_n)\}.
\end{align*}

Now we need to prove that
\[
 \PP(A_n,\alkusm_n \le t) \to \logis(a_1)\cdots \logis(a_k) \pr(V \le t),
\]
where $V$ is the random variable introduced in Proposition \ref{the:Yule}. Let $\mathcal G\n =
\sigma(\prop_n(t),t\ge0)$. Because the order in which the nodes are informed does not depend on
their labeling it holds that
\[
 \EE\left[\ind_{A_n} | \mathcal G\n \right]
 = 0 \vee \prod_{i=1}^k \left(\prop_n(\Thalf\n+a_i/(\lambda p_n))-n^{-1}(i-1) \right )  =: J_a\n.
\]
Using the above formula and noticing that $\alkusm_n$ is $\mathcal G\n$-measurable, we see that
\begin{equation}
\label{eq:Ga}
 \PP(A_n, \alkusm_n \le t)
 = \EE \left[ \ind_{\{\alkusm_n \le t\}} \EE\left( \ind_{A_n} \, | \, \mathcal G\n \right ) \right]
 = \EE J_a\n\ind_{\{\alkusm_n \le t\}}.
\end{equation}
From the previous equation we find that
\begin{equation}\label{eq:ehEE}
\begin{aligned}
 &\left| \PP(A_n, \alkusm_n \le t) - \pr(V \le t) \prod_{i=1}^k \logis(a_i) \right| \\
 =&\left|\EE \left( J_a\n - \frac{\pr(V \le t)}{\PP(\alkusm_n \le t)} \prod_{i=1}^k \logis(a_i) \right) \ind_{\{\alkusm_n \le t\}} \right| \\
 \le &\EE \left| J_a\n - \frac{\pr(V \le t)}{\PP(\alkusm_n \le t)} \prod_{i=1}^k \logis(a_i) \right|.
\end{aligned}
\end{equation}

Finally, we get from Lemma \ref{lem:H} that $J_a\n \pto \logis(a_1)\cdots \logis(a_k)$ and from
Theorem~\ref{the:BroadcastTime} that $\PP(\alkusm_n \le t) \to \pr(V \le t)$. Hence
\[
 J_a\n - \frac{\pr(V \le t)}{\PP(\alkusm_n \le t)} \prod_{i=1}^k \logis(a_i)
 \ \pto \ 0.
\]
Because $\PP(\alkusm_n \le t) \ge \frac{1}{2} \pr(V \le t)$ for all large enough $n$, we see that
the above random numbers are almost surely bounded by 2 in absolute value for all large enough
$n$. Therefore the last line in~\eqref{eq:ehEE} vanishes as $n \to \infty$, and
Theorem~\ref{the:FirstPassageTimes} is proved.
\end{proof}

\subsection*{Acknowledegements}
The main body of research reported in this article was carried out when both authors were working at the University of Jyväskylä, Finland. Financial support from Emil Aaltonen Foundation is gratefully acknowledged. We thank two anonymous referees whose comments have helped to improve the presentation of the article.

\bibliography{lslReferences}
\bibliographystyle{abbrv}
\end{document}